\newcommand{\cF}{{\cal F}}
\newcommand{\RR}{{\mathbb R}}
\newcommand{\HH}{{\mathcal H}}
\newcommand{\EX}{{\mathbb E}}
\newcommand{\EE}{{\mathbb E}}
  \newcommand{\PX}{{\mathbb P}}
\newcommand{\PP}{{\mathbb P}}
\newcommand{\s}{\sigma}
\newcommand{\om}{\omega}
\newcommand{\Om}{\Omega}
\newcommand{\de}{\delta}
\renewcommand{\cF}{\mathcal F}
\numberwithin{equation}{section}
\newtheorem{theorem}{Theorem}[section]
\newtheorem{defn}[theorem]{Definition}
\newtheorem{lemma}[theorem]{Lemma}
\newtheorem{remark}[theorem]{Remark}
\newtheorem{prop}[theorem]{Proposition}
\begin{document}
\title[ LDP  and inviscid shell models]
{Large deviation principle and inviscid shell models}

\author[H. Bessaih ] {Hakima Bessaih }
\address{
H. Bessaih,  University of Wyoming,
Department of Mathematics,
Dept. 3036,
1000 East University Avenue,
Laramie WY 82071,
United States}
\email{Bessaih@uwyo.edu}

 \author[A. Millet]{Annie Millet }
\address{A.  Millet,  SAMOS, Centre d'\'Economie de la Sorbonne,
 Universit\'e Paris 1 Panth\'eon Sorbonne,
90 Rue de Tolbiac, 75634 Paris Cedex France {\it and}
Laboratoire de Probabilit\'es et Mod\`eles Al\'eatoires,
  Universit\'es Paris~6-Paris~7, Bo\^{\i}te Courrier 188,
      4 place Jussieu, 75252 Paris Cedex 05, France}
\email{  annie.millet@univ-paris1.fr {\it and}
 annie.millet@upmc.fr}

\subjclass[2000]{Primary 60H15, 60F10; Secondary 76D06, 76M35. }

\keywords{Shell models of turbulence, viscosity coefficient and inviscid models,
stochastic PDEs, large deviations}

\begin{abstract}
A LDP is proved for the inviscid shell model of turbulence. As the
viscosity coefficient $\nu$ converges to 0 and the noise intensity
is multiplied by $\sqrt{\nu}$, we prove that some shell models of
turbulence with a multiplicative stochastic perturbation driven by a
$H$-valued Brownian motion satisfy a LDP in ${\mathcal C}([0,T],
V)$ for the topology of uniform convergence on $[0,T]$, but
where $V$ is endowed with a  topology weaker than the natural one.
 The initial condition has to belong to $V$
and the proof is based on the weak convergence of a family of
stochastic control equations. The rate function is described in
terms of the solution to  the inviscid equation.
\end{abstract}

\maketitle

%\smallskip

%\noindent {\it Addresses :}\\
%Hakima Bessaih\\
% Ross Hall 210\\
%University of Wyoming\\
%Laramie, WY 82071-3036\\
%United States\\
%email: {bessaih@uwyo.edu}
%\smallskip

%and
%\smallskip

%\noindent  Annie Millet \\
%SAMOS,  Centre d'\'Economie de la Sorbonne,\\
% Universit\'e Paris 1 Panth\'eon Sorbonne,\\
%       90 Rue de Tolbiac,\\ 75634 Paris Cedex 13, France\\
%{\it and}
%Laboratoire de Probabilit\'es et Mod\`eles Al\'eatoires\\
%  Universit\'es Paris~6-Paris~7, Bo\^{\i}te Courrier 188,\\
%      4 place Jussieu, 75252 Paris Cedex 05, France\\
%email :  annie.millet@univ-paris1.fr {\it and}
% annie.millet@upmc.fr
%\smallskip

%\noindent {\it Corresponding author:}\\
%Annie Millet,\\  Laboratoire de Probabilit\'es et Mod\`eles Al\'eatoires\\
%%      (CNRS UMR 7599),\\
% Universit\'es Paris~6-Paris~7,\\
% Bo\^{\i}te Courrier 188,\\
%       4 place Jussieu, 75252 Paris Cedex 05, France\\
%email :  annie.millet@upmc.fr\\
%Phone : (+ 33 1) 44 27  85 15 {\it and} (+ 33 1) 44 07 88 60\\
%Fax : (+ 33 1) 44 27 72 23   {\it and}  (+ 33 1) 44 07 88 60
%\newpage

%%%%%%%%%%%%%%%%%%%%%%%%%%%%%%%%%%%%%%%%%%%%%%%%%%%%%%%%
%%%%%%%%%%%%%%%%%%%%%%%%%%%%%%%%%%%%%%%%%%%%%%%%%%%%%%%%
\section{Introduction}\label{s1}

Shell models,  from E.B. Gledzer, K. Ohkitani, M. Yamada, are
simplified Fourier systems with respect to the Navier-Stokes ones,
where the interaction between different modes is preserved only
between nearest neighbors. These are some of the most interesting
examples of artificial models of fluid dynamics that capture some
properties of turbulent fluids like power law decays of structure
functions.

There is an extended literature on shell models. We refer to
K.~Ohkitani and  M.~ Yamada \cite{OY89},
V.~ S.~Lvov, E.~Podivilov, A.~Pomyalov, I.~Procaccia and D.~Vandembroucq \cite{LPPPV98},
 L.~Biferale  \cite{Biferale} and the references
therein. However, these papers are  mainly dedicated to the numerical approach
and pertain  to the finite dimensional case. In a recent work
by P.~Constantin, B.~Levant and  E.~S.~Titi \cite{CLT06},
 some results of regularity, attractors and inertial
manifolds are proved for deterministic infinite dimensional shells
models. In \cite{CLT07-inviscid} these authors have proved  some regularity results
for the inviscid case. The infinite-dimensional stochastic
version of shell models have been studied by D. Barbato, M.
Barsanti, H. Bessaih and F. Flandoli in \cite{BBBF06} in the case of an
additive random perturbation. Well-posedeness and apriori estimates
were obtained, as well as the existence of an invariant measure.
Some balance laws have been investigated and preliminary results
about the structure functions have been presented.

The more general formulation involving a multiplicative noise reads
as follows
 \[ du(t) + [\nu A u(t) + B(u(t),u(t))]\, dt =  \s(t,u(t))\, dW_t\, , \quad u(0)=\xi.\]
driven by a Hilbert space-valued Brownian motion $W$. It  involves
some similar bilinear operator $B$ with antisymmetric properties and
some linear "second order"  (Laplace) operator $A$ which is
regularizing and multiplied by some non negative coefficient $\nu$
which stands for the viscosity in the usual hydro-dynamical models.
The shell models  are adimensional and the bilinear term is better behaved than
that in the Navier Stokes equation. Existence, uniqueness and
several properties were studied in \cite{BBBF06} in the case on an
additive noise and in \cite{CM} for a multiplicative  noise in the
"regular" case of a non-zero viscosity coefficient which was taken
constant.

Several recent papers have studied a Large Deviation Principle (LDP) for the distribution
 of the solution
to a hydro-dynamical stochastic  evolution equation: S.~Sritharan and P.~Sundar \cite{Sundar} for the
2D Navier Stokes equation, J.~Duan and A.~Millet \cite{DM} for the Boussinesq model,
 where the Navier Stokes
equation is coupled with a similar nonlinear equation describing the temperature evolution,
  U.~Manna, S.~Sritharan  and P.~Sundar \cite{MSS} for shell models of turbulence,
 I.~Chueshov and A.~Millet
\cite{CM} for a wide class of hydro-dynamical equations including
the 2D B\'enard magneto-hydro dynamical
and 3D $\alpha$-Leray Navier Stokes models,
 A.Du, J. Duan and H. Gao \cite{DDG} for two layer quasi-geostrophic
flows modeled by coupled equations with a bi-Laplacian.
All the above papers consider an equation with a given
(fixed) positive viscosity coefficient and study exponential concentration to a deterministic
model when the noise intensity is multiplied by a coefficient $\sqrt{\epsilon}$ which converges to 0.
All these  papers deal with a multiplicative noise and
use the weak convergence approach of LDP, based on the Laplace principle, developed by
P.  Dupuis and R. Ellis in \cite{DupEl97}.
This approach has shown to be successful in several other
infinite-dimensional
cases (see e.g. \cite{BD00}, \cite{BD07},  \cite{WeiLiu})
 and differ from that used to get LDP in finer
topologies for quasi-linear SPDEs, such as \cite{Sowers}, \cite{ChenalMillet},
 \cite{CR}, \cite{Chang}.
For hydro-dynamical models, the LDP was proven in the natural space of trajectories,
that is ${\mathcal C}([0,T],H)
\cap L^2([0,T],V)$, where roughly speaking,  $H$ is $L^2$ and $V= Dom(A^{\frac{1}{2}})$
is the Sobolev space $H_1^2$
with proper periodicity or boundary conditions. The initial condition $\xi$  only
 belongs  to $H$.

The aim of this paper is different. Indeed, the asymptotics we are interested in have a physical
meaning, namely the viscosity coefficient $\nu$ converges to 0. Thus the limit equation, which
corresponds to the inviscid case, is much more difficult to deal with, since the regularizing effect
of the  operator $A$ does not help anymore. Thus, in order to get existence, uniqueness and
apriori estimates to the inviscid equation, we need to start from some more regular initial condition
$\xi\in V$, to impose that $(B(u,u),Au)=0$ for all $u$ regular enough
 (this identity  would be true in the case on the 2D Navier Stokes
equation under proper periodicity properties); note that this equation  is satisfied in the GOY and Sabra  shell
models of turbulence under a suitable relation on the coefficients $a,b$ and $\mu$ stated below.
 Furthermore,  some more conditions on the diffusion coefficient
are required as well. The intensity of the noise has to be
multiplied by $\sqrt{\nu}$ for the convergence to hold.

 The
technique is again that of the weak convergence.  One proves that
given a family  $(h_\nu)$ of random elements of the RKHS of $W$
which converges weakly  to $h$,  the corresponding family of
stochastic control equations,  deduced from the original ones by
shifting the noise by $\frac{h_\nu}{\sqrt {\nu}}$, converges in
distribution to the limit inviscid equation where the Gaussian
noise $W$ has been replaced by $h$. Some apriori control of the
solution to such equations has to be proven uniformly in $\nu>0$
for "small enough" $\nu$.
 Existence and uniqueness as well as
apriori bounds have to be obtained  for the inviscid limit
equation. Some upper bounds of time increments have to be proven
for the inviscid equation and the stochastic model with a small
viscosity coefficient; they are similar to that  in \cite{DM} and
\cite{CM}. The LDP can be shown in ${\mathcal C}([0,T],V)$ for the topology
of uniform convergence on $[0,T]$, but where $V$ is endowed with a weaker
topology, namely that induced by the $H$ norm.
More generally, under some slight extra assumption on the
diffusion coefficient $\s$, the LDP is proved in ${\mathcal C}([0,T],V)$ where
$V$ is endowed with the norm $\|\cdot\|_\alpha:=|A^\alpha (\cdot) |_H$ for
$0\leq \alpha\leq \frac{1}{4}$. The natural case
$\alpha=\frac{1}{2}$ is out of reach because the inviscid limit
equation is much more irregular. Indeed, it is an abstract
equivalent of the Euler equation.
The case $\alpha=0$  corresponds to $H$ and then no more condition on $\s$ is required.
The case  $\alpha = \frac{1}{4}$ is that of an interpolation space which plays a crucial
role in the 2D Navier Stokes equation.
 Note that in  the  different context of a scalar equation, M. Mariani \cite{MaMa}  has also proved  a
LDP for a stochastic PDE when a coefficient $\varepsilon$
 in front of a deterministic operator converges to 0
and the intensity of the Gaussian noise is multiplied by $\sqrt{\varepsilon}$.
However, the physical model and the technique used in   \cite{MaMa} are completely different from ours.
\smallskip

The paper is organized as follows.
 Section 2 gives a precise description of the model and proves apriori bounds
for the norms in  ${\mathcal C}([0,T],H)$ and $  L^2([0,T],V)$ of the stochastic control
 equations uniformly
in the viscosity coefficient $\nu\in ]0,\nu_0]$ for small enough $\nu_0$. Section 3 is
 mainly devoted to
prove existence, uniqueness of the solution to the deterministic inviscid
 equation with an external multiplicative
impulse driven by an element of the RKHS of $W$, as well as apriori bounds
of the solution in ${\mathcal C}([0,T],
V)$ when the initial condition belong to $V$ and under reinforced assumptions on $\s$. Under these extra
assumptions, we are able to improve the apriori estimates of the solution and establish them in
 ${\mathcal C}([0,T],V)$ and $  L^2([0,T],Dom(A))$.
 Finally the weak convergence and compactness of the level sets
of the rate function are proven in section 4; they  imply the LDP in ${\mathcal C}([0,T],V)$
where $V$ is endowed with  the weaker norm associated with $A^\alpha$
 for any value of $\alpha$ with $0\leq \alpha \leq \frac{1}{4}$.

The  LDP for the 2D Navier Stokes equation as the viscosity
coefficient converges to 0 will be studied in a forthcoming paper.

We will denote by $C$ a constant which may change from one line to the next,
 and $C(M)$ a constant depending on $M$.
%%%%%%%%%%%%%%%%%%%%%%%%%%%%%%%%%%%%%%%%%%%%%%%%%%%%%%
%%%%%%%%%%%%%%%%%%%%%%%%%%%%%%%%%%%%%%%%%%%%%%%%%%%%%%
\section{Description of the model} \label{s2}
\subsection{GOY and Sabra shell models}  Let $H$ be the set of all sequences
 $u=(u_1, u_2,\ldots)$ of complex numbers
such that $\sum_n |u_n|^2<\infty$. We consider $H$ as a \emph{real} Hilbert space
endowed  with the inner product $(\cdot,\cdot)$ and the norm $|\cdot|$ of the form
\begin{equation}\label{normH}
(u,v)={\rm Re}\,\sum_{n\geq 1}u_n v_n^*,\quad
|u|^2 =\sum_{n\geq 1} |u_n|^2,
\end{equation}
where $v_n^*$ denotes the complex conjugate of $v_n$.
Let $k_0>0$,  $\mu>1$ and for every $n\geq 1$, set $k_n=k_0\, \mu^n$.
Let $A:Dom(A)\subset H \to H $ be the non-bounded linear operator defined by
\[
(Au)_n = k_n^2 u_n,\quad n=1,2,\ldots,\qquad Dom(A)=\Big\{ u\in H\,
:\; \sum_{n\geq 1} k_n^4 |u_n|^2<\infty\Big\}.
\]
The operator $A$ is clearly self-adjoint, strictly positive definite since $(Au,u)\geq k_0^2 |u|^2$
for $u\in Dom(A)$.
For any $\alpha >0$, set
\begin{equation} \label{Halpha}
{\mathcal H}_\alpha = Dom(A^\alpha) = \{ u\in H \, :\, \sum_{n\geq 1} k_n^{4\alpha} |u_n|^2 <+\infty\},\;
\|u\|^2_\alpha = \sum_{n\geq 1} k_n^{4\alpha} |u_n|^2 \; \mbox{\rm for }\; u\in {\mathcal H}_\alpha.
\end{equation}
Let  ${\mathcal H}_0=H$,
\[ V:=Dom(A^{\frac{1}{2}}) =
 \Big\{ u\in H \, :\, \sum_{n\geq 1} k_n^2 |u_n|^2 <+\infty\Big\}\, ;\; \mbox{\rm also set  }\;
{\mathcal H}= {\mathcal H}_{\frac{1}{4}},\, \|u\|_{\mathcal H} = \|u\|_{\frac{1}{4}}. \]
Then $V$ (as each of the spaces ${\mathcal H}_\alpha$)  is a Hilbert space
for the scalar product $(u,v)_V = Re(\sum_n k_n^2\, u_n\, v_n^*)$, $u,v\in V$
and the associated norm is denoted by
\begin{equation} \label{normV}
\|u\|^2 = \sum_{n\geq 1} k_n^2\, |u_n|^2.
\end{equation}
The adjoint of $V$ with respect to  the $H$ scalar product
is $V' = \{ (u_n)\in {\mathbb C}^{\mathbb N} \, :\,
\sum_{n\geq 1} k_n^{-2}\, |u_n|^2 <+\infty\}$ and $V\subset H\subset V'$
is a Gelfand triple. Let $\langle u\, ,\, v\rangle = Re\left (\sum_{n\geq 1} u_n\, v_n^*\right)$ denote
the duality between $u\in V$ and $v\in V'$.
Clearly for $0\leq \alpha <\beta$,  $u\in {\mathcal H^\beta}$ and $v\in V$ we have
\begin{equation}\label{compcalH}
\|u\|_\alpha^2 \leq k_0^{4(\alpha - \beta)}\, \|u\|^2_\beta \, ,
\; \mbox{\rm and }\;  \|v\|^2_{\mathcal H} \leq |v|\,  \|v\|,
\end{equation}
where the last inequality is proved by the Cauchy-Schwarz inequality.

Set $ u_{-1}= u_{0}=0 $, let $a,b$ be real numbers and
 $B : H\times V \to H$ (or  $B : V\times H \to H$) denote the bilinear operator  defined by
\begin{equation} \label{GOY}
\left[B(u,v)\right]_n=-i\left( a k_{n+1} u_{n+1}^* v_{n+2}^*
+b k_{n} u_{n-1}^* v_{n+1}^* -a k_{n-1} u_{n-1}^* v_{n-2}^*
-b k_{n-1} u_{n-2}^* v_{n-1}^*
\right)
\end{equation}
for $n=1,2,\ldots$ in the  GOY shell-model (see, e.g., \cite{OY89})
or
\begin{equation} \label{Sabra}
\left[B(u,v)\right]_n=-i\left( a k_{n+1} u_{n+1}^*\,  v_{n+2}
+b k_{n} u_{n-1}^* v_{n+1} +a k_{n-1} u_{n-1} v_{n-2}
+b k_{n-1} u_{n-2} v_{n-1}
\right),
\end{equation}
in the  Sabra shell model introduced in \cite{LPPPV98}.

Note that $B$ can be extended as a bilinear operator from $H\times H$ to $V'$ and that  there exists
a constant ${C}>0$ such that  given
$u,v\in H$ and $w\in V$ we have
\begin{equation}\label{trili}
|\langle B(u,v)\, ,\, w\rangle | + |\big( B(u,w)\, ,\, v\big) | + |\big( B(w,u)\, ,\, v\big) |
\leq {C}\, |u|\, |v|\, \|w\|.
\end{equation}
An easy computation proves that for $u,v\in H$ and $w\in V$ (resp. $v,w\in H$ and $u\in V$),
\begin{equation}\label{antisym}
\langle B(u,v)\, ,\, w\rangle = - \big(B(u,w)\, ,\, v\big) \; \mbox{\rm (resp.  }\,
\big( B(u,v)\, ,\, w\big)  = - \big(B(u,w)\, ,\, v\big) \mbox{\rm  \;)}.
\end{equation}
Furthermore, $B:V\times V\to V$ and $B : {\mathcal H}\times {\mathcal H} \to H$;
 indeed, for $u,v\in V$  (resp.  $u,v\in {\mathcal H}$)
 we have
\begin{align}\label{boundB1}
 \|B(u,v)\|^2 & = \sum_{n\geq 1} k_n^2\, |B(u,v)_n|^2\, \leq C\, \|u\|^2 \sup_n k_n^2 |v_n|^2
\leq C\, \|u\|^2\, \|v\|^2,\\
| B(u,v)| & \leq  C\, \|u\|_{\mathcal H} \, \|v\|_{\mathcal H}.
\nonumber
\end{align}
For $u,v$ in either $H$, ${\mathcal H}$ or $V$, let $B(u):=B(u,u)$.
The anti-symmetry property  \eqref{antisym} implies  that
$ |\langle B(u_1)-B(u_2)\, ,\, u_1-u_2\rangle_V| =
 |\langle B(u_1-u_2), u_2\rangle_V|$ for $u_1,u_2\in V$ and
 $ |\langle B(u_1)-B(u_2)\, ,\, u_1-u_2\rangle| =
 |\langle B(u_1-u_2), u_2\rangle|$ for $u_1\in H$ and $u_2\in V$. Hence there exist  positive constants
 $\bar{C}_1$ and $\bar{C}_2$ such that
\begin{eqnarray}\label{diffBV}
|\langle B(u_1)-B(u_2)\, ,\, u_1-u_2\rangle_V| &  \leq  &
\bar{C}_1 \,  \|u_1-u_2\|^2\,   \|u_2\|, \forall u_1,u_2\in V, \\
|\langle B(u_1)-B(u_2)\, ,\, u_1-u_2\rangle| & \leq &
\bar{C}_2 \,  |u_1-u_2|^2\,   \|u_2\|, \forall u_1\in H, \forall u_2\in V \label{diffB-HV}.
\end{eqnarray}
Finally, since $B$ is bilinear, Cauchy-Schwarz's inequality yields
for any $\alpha \in [ 0,\frac{1}{2}]$, $u,v\in V$:
\begin{align}\label{AalphaB}
\big|\big( A^\alpha B(u)- A^\alpha B(v) \, ,\,  A^\alpha(u-v)\big) \big|&
 \leq \big| \big( A^\alpha B(u-v,u)
+  A^\alpha B(v,u-v)\, ,\, A^\alpha (u-v)\big)\big|
\nonumber \\
&  \leq C \|u-v\|^2_\alpha
\, (\|u\|+\|v\|).
\end{align}

In the  GOY shell model,  $B$ is defined by \eqref{GOY};  for any $u\in V$, $Au\in V'$  we have
\[ \langle B(u,u), A u\rangle = Re\Big( -i \sum_{n\geq 1} u_n^*\, u_{n+1}^* \, u_{n+2}^* \mu^{3n+1}\Big)
\, k_0^3 (a+b\mu^2 -a \mu^4 -b\mu^4).\]
Since $\mu \neq 1$,
\begin{equation}\label{B(u)AuG}
 a(1+\mu^2) +b \mu^2 =0\quad \mbox{\rm if and only if  } \; \langle B(u,u)\, ,\, Au\rangle =0, \forall u\in V.
\end{equation}
On the other hand, in the Sabra shell model,  $B$ is defined by \eqref{Sabra} and  one has for
$u\in V$,
\[ \langle B(u,u)\, ,\, Au\rangle = k_0^3 Re \Big( -i \sum_{n\geq 1} \mu^{3n+1}\, \Big[ (a+b\, \mu^2) \,
u_n^*\, u_{n+1}^*\, u_{n+2} + (a+b) \mu^4 u_n \, u_{n+1}\,
u_{n+2}^*\Big]\Big).\] Thus   $(B(u,u),Au)=0$ for every $u\in V$
if and only if $a+b\mu^2=(a+b)\mu^4$ and again  $\mu \neq 1$ shows
that \eqref{B(u)AuG} holds true.
%\begin{equation}\label{B(u)AuS}
% a(1+\mu^2) +b \mu^2 =0\quad \mbox{\rm if and only if } \;
% \langle B(u,u)\, ,\, Au\rangle =0, \forall u\in V.
%\end{equation}

\subsection{Stochastic driving force}
Let $Q$ be a linear
positive  operator in the Hilbert space $H$ which is  trace class,
and hence   compact. Let $H_0 = Q^{\frac12} H$; then $H_0$ is a
Hilbert space with the scalar product
$$
(\phi, \psi)_0 = (Q^{-\frac12}\phi, Q^{-\frac12}\psi),\; \forall
\phi, \psi \in H_0,
$$
together with the induced norm $|\cdot|_0=\sqrt{(\cdot,
\cdot)_0}$. The embedding $i: H_0 \to  H$ is Hilbert-Schmidt and
hence compact, and moreover, $i \; i^* =Q$.
Let $L_Q\equiv L_Q(H_0,H) $ be the space of linear operators $S:H_0\mapsto H$ such that
$SQ^{\frac12}$ is a Hilbert-Schmidt operator  from $H$ to $H$. The norm in the space $L_Q$ is
  defined by  $|S|_{L_Q}^2 =tr (SQS^*)$,  where $S^*$ is the adjoint operator of
$S$. The $L_Q$-norm can be also written in the form
\begin{equation}\label{LQ-norm}
 |S|_{L_Q}^2=tr ([SQ^{1/2}][SQ^{1/2}]^*)=\sum_{k\geq 1} |SQ^{1/2}\psi_k|^2=
 \sum_{k\geq 1} |[SQ^{1/2}]^*\psi_k|^2
\end{equation}
for any orthonormal basis  $\{\psi_k\}$ in $H$, for example $(\psi_k)_n = \delta_n^k$.
\par
Let   $W(t)$ be a   Wiener process  defined   on a filtered
probability space $(\Om, \cF, (\cF_t), \PX)$, taking values in $H$
and with covariance operator $Q$. This means that $W$ is Gaussian, has independent
time increments and that for $s,t\geq 0$, $f,g\in H$,
\[
\EE  (W(s),f)=0\quad\mbox{and}\quad
\EE  (W(s),f) (W(t),g) = \big(s\wedge t)\, (Qf,g).
\]
Let  $\beta_j$ be  standard (scalar) mutually independent Wiener processes,
$\{ e_j\}$ be an  orthonormal basis in $H$ consisting of eigen-elements of $Q$, with
$Qe_j=q_je_j$. Then $W$ has  the following  representation
\begin{equation}\label{W-n}
W(t)=\lim_{n\to\infty} W_n(t)\;\mbox{ in }\; L^2(\Om; H)\; \mbox{ with }
W_n(t)=\sum_{1\leq j\leq n} q^{1/2}_j \beta_j(t) e_j,
\end{equation}
and $Trace(Q)=\sum_{j\geq 1} q_j$. For details concerning this Wiener process
see e.g.  \cite{PZ92}.
\par
Given a viscosity coefficient $\nu >0$, consider the following stochastic shell model
\begin{equation} \label{Sshell}
d_t u(t)+ \big[ \nu A u(t) + B(u(t))\big]\, dt = \sqrt{\nu}\,  \sigma_\nu(t,u(t))\, dW(t),
\end{equation}
where
the noise intensity $\s_\nu: [0, T]\times V \to L_Q(H_0, H)$ of the stochastic perturbation
is properly normalized by the square root of the viscosity coefficient $\nu$.
We assume that $\sigma_\nu$ satisfies the following growth and Lipschitz conditions:
\par
\noindent \textbf{Condition (C1):} {\it
$\s_\nu \in {\mathcal C}\big([0, T] \times V; L_Q(H_0, H)\big)$,
 and there exist non negative  constants $K_i$ and $L_i$
such that for every $t\in [0,T]$ and $u,v\in V$:\\
{\bf (i)}   $|\s_\nu(t,u)|^2_{L_Q} \leq K_0+ K_1 |u|^2+ K_2 \|u\|^2$, \\
{\bf (ii)}   $|\s_\nu(t,u)-\s_\nu(t,v)|^2_{L_Q}
\leq L_1 |u-v|^2 + L_2 \|u-v\|^2$.
}
\smallskip

For technical reasons, in order to prove a large deviation principle for the
distribution of  the solution to \eqref{Sshell} as the viscosity coefficient $\nu$ converges to 0,
 we will need some precise estimates
on the solution of the equation  deduced from \eqref{Sshell} by  shifting
 the Brownian $W$ by some
random element  of its RKHS.  This cannot be deduced from similar ones  on $u$ by means
of a Girsanov transformation since the Girsanov density is not
uniformly bounded when the intensity of the noise tends to zero
(see e.g. \cite{DM} or \cite{CM}).
\par
To describe a set of admissible random shifts,   we introduce the class
 $\mathcal{A}$ as the  set of $H_0-$valued
$(\cF_t)-$predictable stochastic processes $h$ such that
$\int_0^T |h(s)|^2_0 ds < \infty, \; $ a.s.
For fixed $M>0$, let
\[S_M=\Big\{h \in L^2(0, T; H_0): \int_0^T |h(s)|^2_0 ds \leq M\Big\}.\]
The set $S_M$, endowed with the following weak topology, is a
  Polish  (complete separable metric)  space
(see e.g. \cite{BD07}):
$ d_1(h, k)=\sum_{k=1}^{\infty} \frac1{2^k} \big|\int_0^T \big(h(s)-k(s),
\tilde{e}_k(s)\big)_0 ds \big|,$
where $
\{\tilde{e}_k(s)\}_{k=1}^{\infty}$ is an  orthonormal basis
for $L^2([0, T], H_0)$.
For $M>0$ set
\begin{equation} \label{AM}
 \mathcal{A}_M=\{h\in \mathcal{A}: h(\om) \in
 S_M, \; a.s.\}.
\end{equation}
In order to define the stochastic control equation, we introduce for $\nu\geq 0$ a family of   intensity
coefficients $\tilde{\s}_\nu$ of a  random element $h\in {\mathcal A}_M$ for some $M>0$. The case
$\nu=0$ will be that of an inviscid limit
"deterministic" equation with no stochastic integral and which can be dealt with for fixed $\omega$.
We assume that for any $\nu\geq 0$ the coefficient  $\tilde{\sigma}_\nu$
satisfies the following condition:
\medskip\par
\noindent \textbf{Condition (C2):}    {\it ${}\;{\tilde \s}_\nu
\in {\mathcal C}\big([0, T] \times V; L(H_0, H)\big)$ and there exist constants
 $\tilde{K}_{\mathcal H}$,   $\tilde{K}_i$,  and $\tilde{L}_j$, for
$i=0,1$ and $j=1,2$ such that:
\begin{align} |\tilde{\s}_\nu (t,u)|^2_{L(H_0,H)} \leq \tilde{K}_0 + \tilde{K}_1 |u|^2 +
\nu \tilde{K}_\HH \|u\|_{\mathcal H}^2, & \quad \forall t\in [0,T], \;
 \forall u\in V, \label{tilde-s-b}\\
 |\tilde{\s}_\nu(t,u) -\tilde{\s}_\nu(t,v)  |^2_{L(H_0,H)} \leq \tilde{L}_1 |u-v|^2
+ \nu \tilde{L}_2 \|u-v\|^2, & \quad  \forall t\in [0,T], \; \forall u,v\in V,
\label{tilde-s-lip}
\end{align}
where ${\mathcal H}={\mathcal H}_{\frac{1}{4}} $ is defined by
\eqref{Halpha} and  $|\cdot |_{L(H_0,H)}$ denotes the (operator) norm
in the space $L(H_0,H)$ of all bounded linear operators from $H_0$ into $H$. Note that if $\nu=0$ the previous
growth and Lipschitz on $\tilde{\sigma}_0(t,.)$ can be stated  for $u,v\in H$.
}
\begin{remark}\label{re:s-tilde-s}
{\rm Unlike ({\bf C1}) the hypotheses concerning the
control intensity coefficient $\tilde{\s}_\nu $ involve a weaker topology (we deal with
the operator norm $|\cdot |_{L(H_0,H)}$ instead of the trace class norm
$|\cdot |_{L_Q}$). However we require in \eqref{tilde-s-b} a stronger
bound (in the interpolation space $\HH$). One can see that  the noise intensity $\sqrt{\nu}\,  \s_\nu$
satisfies  Condition ({\bf C2}) provided that in  Condition ({\bf C1}), we replace
point (i) by  $|\s_\nu(t,u)|^2_{L_Q} \leq K_0+ K_1 |u|^2+ K_{\mathcal H}  \|u\|_{\mathcal H}^2$.
Thus the class of intensities satisfying both
Conditions  ({\bf C1}) and   ({\bf C2}) when multiplied by $\sqrt{\nu} $  is wider than that those
coefficients which satisfy condition {\bf (C1)}   with $K_2=0$.
}
\end{remark}
\medskip
\par
Let  $M >0$,  $h\in {\mathcal A}_M$,  $\xi$ an $H$-valued random variable independent of $W$  and $\nu >0$.
Under Conditions   ({\bf C1}) and ({\bf C2}) we consider the nonlinear
SPDE
\begin{equation}  \label{uhnu}
d u_h^\nu(t)  + \big[ \nu\,  A u_h^\nu(t) + B\big(u_h^\nu(t) \big) \big]\, dt
 =  \sqrt{\nu}\, \sigma_\nu (t,u_h^\nu(t))\, dW(t) + \tilde{\s}_\nu(t, u_h^\nu(t)) h(t)\, dt,
\end{equation}
 with initial condition
$u_h^\nu(0)=\xi$.
Using \cite{CM}, Theorem 3.1, we know that for every $T>0$  and $\nu >0$ there exists $\bar{K}_2^\nu :=
\bar{K}_2(\nu,T,M)>0$ such that if $h_\nu\in {\mathcal A}_M$, $\EX|\xi|^4<+\infty$
and $0\leq K_2< \bar{K}_2^\nu $, equation
\eqref{uhnu} has a unique solution
$u^\nu_h \in {\mathcal C}([0,T],H)\cap L^2([0,T],V)$ which  satisfies:
\begin{align*}
(u^\nu_h,v)-(\xi,v)& +\int_0^t \big[ \nu \langle u_h^\nu(s) , Av\rangle +\langle B(u_h^\nu (s)), v\rangle \big]\,
ds
\\
& = \int_0^t \big( \sqrt{\nu}\, \s_\nu(s,u_h^\nu(s))\, dW(s)\, ,\, v\big) +
\int_0^t \big( \tilde{\s}_\nu(s,u^\nu_h(s))h(s)\, ,\, v\big)\, ds
\end{align*}
a.s. for all $v\in Dom(A)$ and $t\in [0,T]$.
Note that  $u_h^\nu$ is a weak solution from the analytical point of view, but a strong one
from the probabilistic point of view, that is written in terms of the given Brownian motion $W$. Furthermore,
if $K_2\in [0, \bar{K}_2^\nu[$ and $L_2\in [0,2[$, there exists a constant $C_\nu:=C(K_i, L_j, \tilde{K}_i,
\tilde{K}_{\mathcal H}, T,M,\nu)$ such that
\begin{equation} \label{boundini}
\EX\Big( \sup_{0\leq t\leq T} |u_h^\nu(t)|^4 + \int_0^T \|u_h^\nu(t)\|^2\, dt +
\int_0^T \|u_h^\nu(t)\|_{\mathcal H}^4 \, dt \Big) \leq C_\nu\, (1+\EX|\xi|^4).
\end{equation}
The following proposition proves that $ \bar{K}_2^\nu$ can be chosen independent of $\nu$
and that a proper formulation of upper estimates  of the $H$, ${\mathcal H}$ and $V$ norms of
the solution $u^\nu_h$ to \eqref{uhnu} can be proved uniformly in $h\in {\mathcal A}_M$ and
in  $\nu \in (0,\nu_0]$ % with $0<\nu\leq \nu_0$
for some constant $\nu_0>0$.
\begin{prop} \label{unifnu}  Fix $M>0$, $T>0$,  $\sigma_\nu$ and $\tilde{\s}_\nu$ satisfy Conditions
({\bf C1})--({\bf C2}) and let the initial condition $\xi$ be such that $\EX|\xi|^4<+\infty$.
Then in any shell model where $B$ is defined by \eqref{GOY} or \eqref{Sabra},
there exist constants $\nu_0>0$, $\bar{K}_2$ and  $\bar{C}(M)$ such that if $0<\nu\leq \nu_0$,
 $0\leq K_2<\bar{K}_2$,  $L_2<2$ and
  $h\in {\mathcal A}_M$,  the solution $u^\nu_h$ to \eqref{uhnu} satisfies:
\begin{align} \label{bound1}
 \EX \,\Big(\, & \sup_{0\leq t\leq T}|u^\nu_{h}(t)|^4  +
\nu \int_0^T
\|u^\nu_{h}(s)\|^2 \, ds + \nu \int_0^T \|u^\nu_{h}(s)\|_{\mathcal H}^4\,  ds \, \Big)
 \leq  \bar{C}(M)\,  \big( \EX|\xi|^4 +1\big).
\end{align}
\end{prop}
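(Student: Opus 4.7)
My approach is to apply Itô's formula to $|u^\nu_h(t)|^{2p}$ for $p=1,2$ in equation \eqref{uhnu}, exploiting the antisymmetry identity \eqref{antisym}, which yields $\langle B(u),u\rangle=0$ for $u\in V$ so that the bilinear drift drops out. For $F(x)=|x|^4$, Itô's rule combined with the identity for $|u|^2$ gives
\begin{align*}
|u^\nu_h(t)|^4 + 4\nu\!\int_0^t\! |u^\nu_h|^2\|u^\nu_h\|^2\,ds
&= |\xi|^4 + 4\!\int_0^t\!|u^\nu_h|^2\big(\tilde\sigma_\nu(s,u^\nu_h) h(s),u^\nu_h\big)\,ds \\
&\quad + 2\nu\!\int_0^t\!|u^\nu_h|^2|\sigma_\nu(s,u^\nu_h)|_{L_Q}^2 \,ds + 4\nu\!\int_0^t\!|\sigma_\nu(s,u^\nu_h)^*u^\nu_h|^2\,ds + M(t),
\end{align*}
where $M(t)=4\sqrt\nu\int_0^t |u^\nu_h|^2(u^\nu_h,\sigma_\nu dW)$ is a local martingale; an analogous identity holds for $|u^\nu_h(t)|^2$ with dissipation $2\nu\int\|u^\nu_h\|^2 ds$ on the left.

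The plan is to absorb every term on the right into the dissipation on the left. The two Itô correction integrals are controlled using \textbf{(C1)}(i) and the elementary inequality $|\sigma^*u|^2\leq |\sigma|_{L_Q}^2|u|^2$ by a constant multiple of $\nu|u|^2(1+|u|^2+\|u\|^2)$, whose $\nu|u|^2\|u\|^2$ component fits inside the dissipation provided $K_2$ stays strictly below a structural threshold. For the control drift, Cauchy--Schwarz and Young bound it by $2|u|^2|\tilde\sigma_\nu|_{L(H_0,H)}^2 + 2|u|^4|h|_0^2$; inserting \eqref{tilde-s-b}, the only potentially dangerous piece is $2\nu\tilde K_{\mathcal H}|u|^2\|u\|_{\mathcal H}^2$, which via the interpolation $\|u\|_{\mathcal H}^2\leq |u|\,\|u\|$ from \eqref{compcalH} and a second use of Young becomes $\eta\nu|u|^2\|u\|^2 + C(\eta,\tilde K_{\mathcal H})\nu|u|^4$. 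Choosing $\eta$ small and $\bar K_2$ strictly below a structural constant (both independent of $\nu$) forces the total coefficient of $\nu|u|^2\|u\|^2$ on the right to be $<4$, so dissipation dominates.

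Standard technicalities complete the estimate: a stopping time $\tau_N=\inf\{t:|u^\nu_h(t)|\geq N\}\wedge T$ makes all expectations finite; the martingale $M$ is handled by Burkholder--Davis--Gundy followed by a Young split consuming at most $\tfrac12\EX\sup_{s\leq t\wedge\tau_N}|u^\nu_h(s)|^4$ from the right; and $h\in\mathcal A_M$ gives $\int_0^T|h|^2_0\,ds\leq M$ a.s., so Gronwall's lemma applied to $\varphi(t)=\EX\sup_{s\leq t\wedge\tau_N}|u^\nu_h(s)|^4$ yields, after $N\to\infty$ by Fatou, the bound $\EX\sup_t|u^\nu_h|^4 + \nu\EX\int_0^T|u^\nu_h|^2\|u^\nu_h\|^2\,ds\leq \bar C(M)(1+\EX|\xi|^4)$. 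The third summand in \eqref{bound1} is then immediate from $\|u\|_{\mathcal H}^4\leq |u|^2\|u\|^2$, and the middle summand $\nu\EX\int\|u^\nu_h\|^2 ds$ follows from the same absorption scheme applied to the simpler $|u^\nu_h|^2$-Itô identity (this is where the constraint $L_2<2$ enters when one needs to treat the difference of two solutions for uniqueness).

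The main difficulty is \emph{uniformity in $\nu$}. The control drift $\tilde\sigma_\nu h$ carries no $\sqrt\nu$ factor, so if the bound on $|\tilde\sigma_\nu|^2$ were of the shape $C(1+\|u\|_{\mathcal H}^2)$ the absorption into the $O(\nu)$ dissipation would fail as $\nu\downarrow 0$. The argument succeeds precisely because \textbf{(C2)} places an explicit factor of $\nu$ in front of $\tilde K_{\mathcal H}\|u\|_{\mathcal H}^2$ in \eqref{tilde-s-b}; after the interpolation $\|u\|_{\mathcal H}^2\leq |u|\,\|u\|$ this term is comparable to $\nu\|u\|^2$, exactly matching the dissipation. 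Tracking this matching carefully --- so that $\bar K_2$, $\nu_0$, the threshold on $L_2$, and $\bar C(M)$ depend only on structural constants and on $M$, never on $\nu$ --- is the delicate point of the proof.
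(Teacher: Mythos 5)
Your overall scheme---It\^o applied to $|u|^2$ and $|u|^4$, antisymmetry \eqref{antisym} to kill the nonlinearity, absorption of the $O(\nu |u|^2\|u\|^2)$ pieces coming from \textbf{(C1)}(i) and from \textbf{(C2)} (via the interpolation \eqref{compcalH} plus Young) into the $\nu$-dissipation, BDG with a Young split for the martingale, and a stopping/Fatou passage---mirrors the paper's argument step by step, including the key observation that the explicit $\nu$ in front of $\tilde K_{\mathcal H}$ in \eqref{tilde-s-b} is exactly what makes the bound uniform in $\nu\in(0,\nu_0]$.

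The one place where your write-up hides a genuine issue is the final Gronwall step. You propose to ``apply Gronwall's lemma to $\varphi(t)=\EX\sup_{s\leq t\wedge\tau_N}|u^\nu_h(s)|^4$.'' Taken literally this fails: the control drift produces the random weight $|h(r)|_0$ (and $|h(r)|_0^2$), which is correlated with $u^\nu_h$, so passing $\EX$ inside yields only $\EX\int_0^t |h|_0^2\,|u^\nu_h|^4\,dr\le M\,\EX\sup_{s\le t}|u^\nu_h(s)|^4$, which cannot be iterated without smallness of $M$. The correct order is: first a \emph{pathwise} Gronwall against the random weight (using $\int_0^T |h(s)|_0^2\,ds\le M$ a.s.\ to bound $\int_0^T\varphi\,ds\le\Phi$), then take expectation, then BDG and absorb the resulting $\EX X(t)$- and $\EX Y(t)$-terms into the left. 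This is precisely the content of Lemma~3.2 of \cite{CM}, which the paper invokes. A consequence is that the BDG--Young constants must be taken smaller than $e^{-\Phi}$ (not the absolute $1/2$ you quote), and since $\Phi$ depends on $M$ this is where $\bar C(M)$ acquires its $M$-dependence. Once the Gronwall step is carried out in this order, your argument closes and coincides with the paper's.
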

\begin{proof}
 For every $N>0$, set
%\begin{equation*}% \label{TauN}
$\tau_N = \inf\{t:\; |u^\nu_{h}(t)| \geq N \}\wedge T.$
%\end{equation*}
 It\^o's
 formula and the antisymmetry relation in \eqref{antisym}
yield that for $t \in [0, T]$,
\begin{align} \label{Ito1}
 |u^\nu_{h}(t\wedge& \tau_N)|^2 =  |\xi|^2
+ 2 \sqrt{\nu}\, \int_0^{t\wedge \tau_N}\!\!
 \big( \sigma_\nu(s,u^\nu_{h}(s))  dW(s) , u^\nu_{h}(s)\big)
-2\, \nu \int_0^{t\wedge \tau_N}\!\! \|u^\nu_{h}(s)\|^2 ds
 \nonumber \\
& \, +2\int_0^{t\wedge \tau_N}\!\! \big(
 \tilde{\sigma}_\nu (s,u^\nu_{h}(s)) h(s), u^\nu_{h}(s)\big) \, ds
 +  \nu \int_0^{t\wedge \tau_N} \!\! |\sigma_\nu(s,u^\nu_{h}(s))|_{L_Q}^2\, ds,
\end{align}
and using again It\^o's formula we have
\begin{equation} \label{estimate1}
|u^\nu_{h}(t\wedge \tau_N)|^4   + 4\, \nu  \int_0^{t\wedge \tau_N} \!\! |u^\nu_{h}(r)|^2  \,
   \|u^\nu_{h}(r)\|^2 \, dr
\leq \;   |\xi|^4   + I(t) +  \sum_{1\leq j\leq 3} {T}_j(t),
\end{equation}
where
\begin{eqnarray*}
I(t) &= & 4\sqrt{\nu} \;  \Big| \int_0^{t\wedge \tau_N}
\big(\s_\nu(r,u^\nu_{h}(r))\; dW(r),
u^\nu_{h}(r)\; |u^\nu_{h}(r)|^{2}\big ) \Big| , \\
{T}_1(t) &= & 4 \,  \int_0^{t\wedge \tau_N}
|(\tilde{\s}_\nu (r,u^\nu_{h}(r))\, h(r)\,,\:  u^\nu_{h}(r))| \; |u^\nu_{h}(r)|^{2}
dr, \\
T_2(t) &= &  2\nu \,  \int_0^{t\wedge \tau_N}
|\s_\nu(r,u^\nu_{h}(r))|^2_{L_Q} \; |u^\nu_{h}(r) |^2   dr,  \\
{T}_3(t) &= & 4\nu   \,  \int_0^{t\wedge \tau_N}
|\s_\nu^*(s, u^\nu_{h}(r))\;  u^\nu_{h}(r)|^2_{0}\,  dr.
\end{eqnarray*}
Since $h\in \mathcal{A}_M$, the Cauchy-Schwarz and Young  inequalities and
 condition {\bf (C2)}   imply that for any $\epsilon >0$,
\begin{align} \label{majT1}
 {T}_1 (t)& \leq \; 4  \;   \int_0^{t\wedge \tau_N}\!\!
\Big( \sqrt{\tilde{K}_0} +
\sqrt{\tilde{K}_1} \, |u^\nu_{h}(r)| +  \sqrt{\nu\, \tilde{K}_{\mathcal H}} \,
k_0^{-\frac{1}{2}} \|u^\nu_{h}(r)\| \Big) \,|h(r)|_0 \,|u^\nu_{h}(r)|^3  dr  \nonumber \\
&  \leq \; 4\, \sqrt{\tilde{K}_0 \, M\, T}
+ 4 \Big(\sqrt{\tilde{K}_0 } + \sqrt{\tilde{K}_1}\Big)  \,  \int_0^{t\wedge \tau_N}
|h(r)|_0 \, |u^\nu_h(r)|^4\, ds \nonumber \\
& \qquad +
\epsilon \, \nu \int_0^t \|u^\nu_{h}(r)\|^2 \, |u^\nu_{h}(r)|^2 \, dr
+  \frac{4\, \tilde{K}_{\mathcal H}}{\epsilon\, k_0}  \, \int_0^{t\wedge \tau_N} \!\! |h(r)|_0^2 \,
|u^\nu_{h}(r)|^4 \, dr.
\end{align}
Using condition {\bf (C1)} we deduce
\begin{align} \label{majT2}
& {T}_2 (t)+T_3(t)  \leq  \; 6\, \nu  \,  \int_0^{t\wedge \tau_N} \!\!
\big[K_0+K_1\, |u^\nu_h(r)|^2 + K_2 \|u^\nu_{h}(r)\|^2\big]
 \ |u^\nu_{h}(r)|^2 \, dr \nonumber \\
& \quad \leq 6\, \nu\, K_0\, T +   6\, \nu\, (K_0+K_1)  \, \int_0^{t\wedge \tau_N} \!\! |u^\nu_h(r)|^4\, dr
+ 6 \, \nu \, K_2 \int_0^t \| u^\nu_h(r)\|^2 \,  |u^\nu_{h}(r)|^2  dr.
\end{align}
Let $K_2 \leq \frac{1}{2}$ and $0< \epsilon \leq  2-3K_2$; set
\[ \varphi(r)=4\Big( \sqrt{\tilde{K}_0} +  \sqrt{\tilde{K}_1}\Big)\, |h(r)|_0 +
\frac{4\tilde{K}_{\mathcal H}}{\epsilon k_0}\, |h(r)|_0^2 + 6\, \nu (K_0+K_1) .\] %\; \mbox{\rm a.s.}.\]
Then  a.s.
\begin{equation}\label{intphi}
\int_0^T \varphi(r)\, dr\leq 4\Big( \sqrt{\tilde{K}_0} +  \sqrt{\tilde{K}_1}\Big)\, \sqrt{M\, T}
+ \frac{4\tilde{K}_{\mathcal H}}{\epsilon k_0}\, M +  6\, \nu (K_0+K_1)\, T:=\Phi
\end{equation}
and the inequalities \eqref{estimate1}-\eqref{majT2}
 yield that for
\[ X(t)=\sup_{r\leq t} |u^\nu_h(r\wedge \tau_N)|^4\; ,\;
 Y(t)=\nu \int_0^t \|u^\nu_h(r\wedge \tau_N)\|^2\,
|u^\nu_h(r\wedge \tau_N)|^2\, ds ,\]
\begin{equation}\label{estimate2}
X(t)+(4-6K_2-\epsilon) Y(t)\leq |\xi|^4 + \Big(4 \sqrt{\tilde{K}_0 M T}+  6\nu K_0T   \Big) + I(t)
+ \int_0^t \varphi(s)\, X(s)\, ds.
\end{equation}
The Burkholder-Davis-Gundy inequality, ({\bf C1}),    Cauchy-Schwarz and Young's  inequalities 
yield   that for $t\in[0, T]$ and $\delta,\kappa >0$,
\begin{align} \label{estimate3}
& \EX  I(t)  \leq 12 \,\sqrt{\nu} \,  \EX \Big( \Big\{ \int_0^{t\wedge \tau_N}
\big[ K_0+K_1\, |u^\nu_h(s)|^2 + K_2\, \|u^\nu_h(s)\|^2\big]\, |u^\nu_{h}(r)|^6  ds \Big\}^\frac12 \Big)
     \nonumber \\
 &\; \leq  12\, \sqrt{\nu} \, \EX \Big( \sup_{0\leq s\leq t} |u^\nu_h(s\wedge\tau_N)|^2 \,
 \Big\{ \int_0^{t\wedge \tau_N}
\big[ K_0+K_1\, |u^\nu_h(s)|^2 + K_2\, \|u^\nu_h(s)\|^2\big]\, |u^\nu_{h}(s)|^2  ds \Big\}^\frac12 \Big)
  \nonumber \\
&\;  \leq  \delta \, \EX(Y(t)) +   \Big( \frac{36  K_2 }{\delta} + \kappa\, \nu\Big)   \, \EX(X(t))
 + \frac{36}{\kappa}\, \Big[ K_0\, T  + (K_0+K_1) \int_0^t \EX (X(s))\, ds\Big].
\end{align}
Thus we can apply Lemma 3.2 in \cite{CM} (see also Lemma 3.2  in \cite{DM}),
 and we deduce that for $0<\nu\leq \nu_0$, $K_2\leq \frac{1}{2}$, $\epsilon= \alpha = \frac{1}{2}$,
$  % \[ %begin{equation} \label{constants}
\beta=\frac{36 K_2}{\delta} + \kappa\, \nu_0 \leq 2^{-1}\, e^{-\Phi}$, $ \delta
\leq \alpha 2^{-1}\, e^{-\Phi}$  % \; \mbox{\rm and }
and $\gamma =\frac{36}{\kappa}(K_0+K_1)$,   %\]
%end{equation}
\begin{equation} \label{Gronwallgene}
\EX \Big( X(T)+\alpha  Y(T)\Big) \leq 2 \exp\big( \Phi +2T\gamma e^\Phi\big)
\Big[ 4\sqrt{\tilde{K}_0\, M\, T} +6 \nu_0 K_0 T + \frac{36}{\kappa} K_0T + \EX(|\xi|^4)\Big].
\end{equation}
%The inequalities which yield   \eqref{Gronwallgene} are satisfied for the previous choice
%of $\epsilon, \alpha, \beta, \kappa$ and $K_2$.
%if $\epsilon=\alpha=\frac{1}{2}$,
%$\nu\in ]0, \nu_0]$,
% $\delta=2^{-2}\, e^{-\Phi}$, $\kappa=2^{-2}e^{-\Phi} \nu_0^{-1}$ and
%$36\, K_2 \leq (2^{-4}\, e^{-2\Phi})\wedge {2}^{-1}$.
Using the last inequality from \eqref{compcalH}, we deduce that for $K_2$ small enough, $\bar{C}(M)$
independent of $N$ and $\nu\in ]0,\nu_0]$,
\[ \EX\Big( \sup_{0\leq t\leq T} |u^\nu_h(t\wedge\tau_N)|^4 +\nu\, \int_0^{\tau_N}\|u^\nu_h(t)\|_{\mathcal H}^4
\, dt \Big) \leq \bar{C}(M) (1+\EX(|\xi|^4)).
\]
As $N\to +\infty$, the monotone convergence theorem yields that for $\bar{K}_2$ small enough and $\nu\in ]0,\nu_0]$
\[ \EX\Big( \sup_{0\leq t\leq T} |u^\nu_h(t)|^4 +\nu\, \int_0^T \|u^\nu_h(t)\|_{\mathcal H}^4
\, dt \Big) \leq \bar{C}(M) (1+\EX(|\xi|^4)).
\]
\par
This inequality and \eqref{Gronwallgene} with $t$ instead of $t\wedge\tau_N$ conclude the proof of \eqref{bound1}
by a similar simpler computation based on conditions {\bf (C1)} and {\bf (C2)}.
\end{proof}

\section{Well posedeness, more a priori bounds and inviscid equation}\label{s3}
%Fix $T>0$ and let $ X: = {\mathcal C}\big([0, T]; H\big) \cap L^2\big(0, T;V\big) $
%denote the Banach space with the norm defined by
%\begin{equation} \label{norm}
% \|u\|_X = \Big\{\sup_{0\leq s\leq T}|u(s)|^2+ \int_0^T \|
%u(s)\|^2 ds\Big\}^\frac12 .
%\end{equation}
%In this section we suppose that $\langle B(u,u),Au\rangle =0$ for
%every $u\in V$.
The aim of this section is twofold. On one hand,
we deal with the inviscid case $\nu=0$ for which the PDE
\begin{equation}\label{u0h}
du_h^0(t) + B(u_h^0(t))\, dt = \tilde{\sigma}_0(t,u^0_h(t))\, h(t)\, dt\; , \quad u_h^0(0)=\xi
\end{equation}
can be solved for every $\omega$.
%We will show that \eqref{u0h} has a solution  $u\in L^\infty([0,T]\times \Omega,H)$.
 In order to prove that \eqref{u0h} has a unique solution in
${\mathcal C}([0,T],V)$ a.s., we will need stronger assumptions on the constants  $\mu, a,b$
defining $B$, the initial condition $\xi$ and $\tilde{\s}_0$.
The initial condition $\xi$  has to belong to $V$  and the coefficients $a,b,\mu$ have to be chosen such that
$(B(u,u), Au)=0$ for $u\in V$ (see \eqref{B(u)AuG}).  On the other hand, under these assumptions 
and under stronger assumptions on $\s_\nu$ and $\tilde{\s}_\nu$,
 similar to that imposed on $\tilde{\s}_0$,
 we will prove further properties of $u_h^\nu$
for  a strictly positive viscosity coefficient $\nu$.

%%%%%%%%%%%%%%%%%%%%%%%%%%%%%%%%
Thus, suppose furthermore that for $\nu >0$ (resp. $\nu =0$), the map
\[ \tilde{\s}_\nu : [0,T]\times Dom(A) \to L(H_0,V) \;
 (\mbox{\rm resp. } \tilde{\s}_0 : [0,T]\times V \to L(H_0,V))\]
satisfies the following:
\smallskip

\noindent {\bf Condition (C3):} {\it There exist non negative constants $\tilde{K}_i$ and $\tilde{L}_j$,
$i=0,1,2$, $j=1,2$ such that for $s\in [0,T]$ and for any  $u,v\in Dom(A)$ if $\nu>0$ (resp.
for any    $u,v\in V$ if $\nu=0$),
\begin{equation} \label{growthbis}
|A^{\frac{1}{2}} \tilde{\s}_\nu(s,u)|_{L(H_0,H)}^2 \leq \tilde{K}_0 + \tilde{K}_1 \, \|u\|^2 +
\nu\, \tilde{K}_2\, |Au|^2,
\end{equation}
and
\begin{equation} \label{Lipbis}
|A^{\frac{1}{2}} \tilde{\s}_\nu(s,u) - A^{\frac{1}{2}} \tilde{\s}_\nu(s,v)  |_{L(H_0,H)}^2 \leq
\tilde{L}_1  \, \|u-v\|^2 +
\nu\, \tilde{L}_2\, |Au-Av|^2.
\end{equation}
}
\begin{theorem}\label{exisuniq0}
Suppose that $\tilde{\s}_0$ satisfies the conditions {\bf (C2)} and {\bf (C3)}
and that  % $B$ satisfies \eqref{B(u)AuG} or \eqref{B(u)AuS}.
the coefficients $a,b,\mu$ defining $B$ satisfy $a(1+\mu^2)+b\mu^2=0$.
Let $\xi\in V$ be deterministic.
For any $M>0$ there exists $C(M)$ such that  equation \eqref{u0h} has a unique
solution in ${\mathcal C}([0,T],V)$ for any $h\in {\mathcal A}_M$, and  a.s. one has:
\begin{equation}\label{boundu0}
\sup_{h\in {\mathcal A}_M}\, \sup_{0\leq t\leq T} \|u^0_h(t)\| \leq C(M)(1+\|\xi\|) .
\end{equation}
\end{theorem}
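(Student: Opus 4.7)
My plan is to build the solution by a Galerkin scheme in $V$, prove uniqueness in the larger space $H$ using the bilinear estimate \eqref{diffB-HV}, and then upgrade weak continuity in $V$ to strong continuity via an energy identity. Throughout, $\omega$ is fixed, so the equation is deterministic with $\int_0^T|h(s)|_0^2\,ds\le M$.

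For existence, let $\{e_n\}$ be the orthonormal eigenbasis of $A$, $H_n=\mathrm{span}(e_1,\dots,e_n)$, $P_n$ the orthogonal projection, and consider the ODE
\begin{equation*}
\tfrac{d}{dt}u_n(t)=-P_nB(u_n(t))+P_n\tilde\sigma_0(t,u_n(t))h(t),\qquad u_n(0)=P_n\xi.
\end{equation*}
Local existence follows from Cauchy--Lipschitz since $B$ is bilinear on the finite-dimensional space $H_n$ and $\tilde\sigma_0$ is Lipschitz by (C2). For the a priori bound in $V$, I test against $Au_n$: because $A$ preserves $H_n$, $(P_nB(u_n),Au_n)=(B(u_n),Au_n)=0$ by \eqref{B(u)AuG} and the assumption $a(1+\mu^2)+b\mu^2=0$. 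Using \eqref{growthbis} with $\nu=0$ together with Cauchy--Schwarz and Young,
\begin{equation*}
\tfrac{d}{dt}\|u_n(t)\|^2\le 2\,|A^{1/2}\tilde\sigma_0(t,u_n)|_{L(H_0,H)}\,|h(t)|_0\,\|u_n(t)\|\le \tilde K_0+\bigl(\tilde K_1+|h(t)|_0^2\bigr)\|u_n(t)\|^2.
\end{equation*}
Gronwall on $[0,T]$ then yields $\sup_n\sup_{t\le T}\|u_n(t)\|^2\le(\|\xi\|^2+\tilde K_0T)\exp(\tilde K_1T+M)$, which simultaneously globalises the Galerkin solution and produces the uniform bound \eqref{boundu0}.

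To pass to the limit, I use the apriori bound plus the equation to control $du_n/dt$ in $L^2(0,T;V')$: $B(u_n)$ is bounded in $L^\infty(0,T;V')$ by \eqref{trili} since $|B(u_n)|_{V'}\le C|u_n|^2$, and $\tilde\sigma_0(\cdot,u_n)h$ is bounded in $L^2(0,T;H)$ by (C2) and $h\in L^2(0,T;H_0)$. Aubin--Lions applied to $V\hookrightarrow\!\hookrightarrow H\hookrightarrow V'$ extracts a subsequence with $u_n\to u$ strongly in $L^2(0,T;H)$, weakly in $L^2(0,T;V)$ and weak-$*$ in $L^\infty(0,T;V)$; bilinearity and \eqref{trili} then let me identify $B(u_n)\to B(u)$ in the sense of distributions in $V'$, and (C2)--(C3) give the corresponding convergence of the $\tilde\sigma_0$ term, so $u$ solves \eqref{u0h}.

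For uniqueness, if $u_1,u_2\in\mathcal C([0,T],V)$ are two solutions and $w=u_1-u_2$, I test the difference equation against $w$ in $H$ and apply \eqref{diffB-HV} (one solution need only lie in $H$) together with the Lipschitz bound \eqref{tilde-s-lip} with $\nu=0$:
\begin{equation*}
\tfrac{d}{dt}|w(t)|^2\le 2\bar C_2\|u_2(t)\|\,|w(t)|^2+2\sqrt{\tilde L_1}\,|h(t)|_0\,|w(t)|^2.
\end{equation*}
Since $\|u_2\|$ is bounded by \eqref{boundu0} and $|h|_0\in L^2(0,T)$, Gronwall forces $w\equiv0$.

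The last and, in my view, trickiest step is upgrading $u\in L^\infty(0,T;V)\cap\mathcal C([0,T],H)$ to $\mathcal C([0,T],V)$. Weak continuity in $V$ is automatic from the $L^\infty(V)$ bound and $H$-continuity, so it suffices to show $t\mapsto\|u(t)\|^2$ is continuous. The identity
\begin{equation*}
\|u_n(t)\|^2=\|P_n\xi\|^2+2\int_0^t\bigl(A^{1/2}\tilde\sigma_0(s,u_n(s))h(s),A^{1/2}u_n(s)\bigr)\,ds
\end{equation*}
holds for the Galerkin scheme thanks to the cancellation used above; passing to the limit using strong $L^2(0,T;H)$ convergence, the Lipschitz bound \eqref{Lipbis}, and weak $L^2(0,T;V)$ convergence, I obtain the same identity for $u$, and its right-hand side is absolutely continuous in $t$. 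Combined with weak $V$-continuity this yields norm continuity and hence strong $V$-continuity. The bound \eqref{boundu0} then follows by lower semicontinuity of the $V$-norm from the uniform Galerkin estimate, with $C(M)$ depending only on $M$, $T$, $\tilde K_0$, $\tilde K_1$ and $k_0$.
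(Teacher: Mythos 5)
Your Galerkin skeleton (projection onto $H_n$, testing against $Au_n$ to exploit $(B(u),Au)=0$, Gronwall in the $V$-norm) matches the paper, and your uniqueness argument in the $H$-norm via \eqref{diffB-HV} and \eqref{tilde-s-lip} is a valid alternative to the paper's $V$-norm uniqueness via \eqref{diffBV} and \eqref{Lipbis}. Your compactness step (Aubin--Lions for $V\hookrightarrow\!\hookrightarrow H \hookrightarrow V'$) also works and replaces the paper's more elementary componentwise Ascoli--Arzel\`a/diagonal extraction; both identify the limit $B(u_n)\to B(u)$ in the same way.

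However, your final step --- upgrading $L^\infty(0,T;V)\cap\mathcal C([0,T],H)$ to $\mathcal C([0,T],V)$ via the energy identity --- has a real gap, and is moreover avoidable. To pass to the limit in
\[
\int_0^t\bigl(A^{1/2}\tilde\sigma_0(s,u_n(s))h(s),A^{1/2}u_n(s)\bigr)\,ds,
\]
you split off $(A^{1/2}[\tilde\sigma_0(s,u_n)-\tilde\sigma_0(s,u)]h(s),A^{1/2}u_n(s))$; but by \eqref{Lipbis} with $\nu=0$ the Lipschitz constant of $A^{1/2}\tilde\sigma_0$ is with respect to the $V$-norm, i.e. $\lvert A^{1/2}[\tilde\sigma_0(s,u_n)-\tilde\sigma_0(s,u)]h(s)\rvert \le \sqrt{\tilde L_1}\,\lVert u_n(s)-u(s)\rVert\,\lvert h(s)\rvert_0$, and you only have strong $L^2(0,T;H)$ convergence of $u_n$ to $u$, not in $V$. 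So this term is not under control. And even if it were, the passage from $\lVert u_n(t)\rVert^2=\cdots$ to $\lVert u(t)\rVert^2=\cdots$ requires $\lVert u_n(t)\rVert\to\lVert u(t)\rVert$ pointwise, which weak lower semicontinuity alone does not give and which is exactly what you are trying to establish --- the argument is circular. The paper avoids all of this by a feature special to shell models that you did not exploit: by \eqref{boundB1} one has $\lVert B(v)\rVert\le C\lVert v\rVert^2$, so $B(u^0_h)\in L^\infty(0,T;V)$, and by \eqref{growthbis} with $\nu=0$ one has $\lVert\tilde\sigma_0(s,u^0_h(s))h(s)\rVert\le(\tilde K_0+\tilde K_1\lVert u^0_h(s)\rVert^2)^{1/2}\lvert h(s)\rvert_0\in L^2(0,T)$. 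Hence the integrand in
\[
u^0_h(t)=\xi+\int_0^t\bigl[-B(u^0_h(s))+\tilde\sigma_0(s,u^0_h(s))h(s)\bigr]\,ds
\]
lies in $L^2(0,T;V)$, so $u^0_h$ is absolutely continuous from $[0,T]$ into $V$ and in particular belongs to $\mathcal C([0,T],V)$. This replaces your entire last paragraph and removes the need for the energy-identity passage.
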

Since equation \eqref{u0h} can be considered for any fixed $\omega$, it suffices to check that
the deterministic equation  \eqref{u0h}
has a unique solution in ${\mathcal C}([0,T],V)$ for any $h\in S_M$ and that \eqref{boundu0} holds.
  For any $m\geq 1$, let $ H_m = span(\varphi_1, \cdots, \varphi_m) \subset
Dom(A)$,
\begin{equation} \label{HPm}
  P_m: H \to H_m \quad \mbox{\rm denote the orthogonal
projection from  $H$ onto $H_m$},
  \end{equation}
and finally  let
$\tilde{\s}_{0,m}=P_m \tilde{\s}_0$. Clearly  $P_m$ is a
contraction of $H$ and $|\tilde{\s}_{0,m}(t,u)|_{L(H_0,H)}^2 \leq
| \tilde{\s}_0(t,u)|_{L(H_0,H)}^2$. Set $u^0_{m,h}(0)=P_m\, \xi$ and
consider the ODE on  the $m$-dimensional space $H_m$
defined  by
\begin{equation}\label{Galerkin0}
d\big( u^0_{m,h}(t)\, ,\, v\big) = \big[-\big( B(u^0_{m,h}(t) )\,
,\, v\big) +\big( \tilde{\s}_0(t,u^0_{m,h}(t))\, h(t)\, ,v\big)
\big]\, dt
\end{equation}
for every $v\in H_m$.

Note that  using    \eqref{boundB1}  we
deduce that
  the map $  u \in H_m \mapsto \langle B(u)\, ,\, v\rangle  $ is
locally Lipschitz.
 Furthermore, since there exists some constant $C(m)$
 such that $\|u\| \vee \|u\|_{\mathcal H}\leq C(m) |u|$ for $u\in H_m$,
 Condition ({\bf C2})  implies  that the
map  $u\in H_m  \mapsto \big( (\tilde{\s}_{0,m}(t, u) h(t)\, ,\,
\varphi_k) : 1\leq k\leq m \big) $, is   globally Lipschitz from
$H_m$ to $\RR^m$ uniformly in $t$.
 Hence by a well-known result about existence and
uniqueness of solutions to ODEs, there exists a maximal solution
$u^0_{m,h}=\sum_{k=1}^m (u^0_{m,h}\, ,\, \varphi_k\big)\,
\varphi_k$ to \eqref{Galerkin0}, i.e., a (random) time
$\tau^0_{m,h}\leq T$ such that \eqref{Galerkin0} holds for $t<
\tau^0_{m,h}$ and as $t \uparrow \tau^0_{m,h}<T$, $|u^0_{m,h}(t)|
\to \infty$. The following lemma provides the (global) existence
and uniqueness of approximate solutions as well as their uniform a
priori estimates. This is the main preliminary step in the proof
of Theorem~\ref{exisuniq0}.
\begin{lemma}\label{boundGalerkin}
Suppose that the assumptions of Theorem \ref{exisuniq0} are
satisfied and fix $M>0$. Then for every $h\in {\mathcal A}_M$
 equation \eqref{Galerkin0}
has a unique solution in ${\mathcal C}([0,T],H_m)$. There exists
some constant $C(M)$ such that
 for every  $h\in {\mathcal A}_M$,
\begin{equation}\label{boundu0nh}
\sup_m\,  \sup_{0\leq t\leq T} \|u^0_{m,h}(t)\|^2 \leq C(M)\,
(1+\|\xi\|^2)  \; a.s.
\end{equation}
\end{lemma}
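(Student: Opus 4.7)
Local existence and uniqueness of a maximal $H_m$-valued solution $u^0_{m,h}$ on some interval $[0,\tau^0_{m,h})$ is already granted by the Cauchy-Lipschitz theorem, as noted in the paragraph preceding the statement: on the finite-dimensional space $H_m$ all norms are equivalent, so the local Lipschitz continuity of $B$ (coming from \eqref{boundB1}) and the Lipschitz continuity of $u\mapsto\tilde\s_0(t,u)h(t)$ (coming from (C2)) from $H_m$ into $\R^m$, uniform in $t\in[0,T]$, do the job. The task therefore reduces to the a priori $V$-bound~\eqref{boundu0nh}: once it is established, the inequality $|u|\leq k_0^{-1}\|u\|$ prevents $|u^0_{m,h}(t)|$ from blowing up, forcing $\tau^0_{m,h}=T$ and yielding a global solution in $\mathcal{C}([0,T],H_m)$.

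\textbf{Main step.} Since $H_m=\mathrm{span}(\varphi_1,\dots,\varphi_m)$ is spanned by eigenvectors of $A$, the operator $A$ leaves $H_m$ invariant, so $Au^0_{m,h}(t)\in H_m$ is an admissible test function in~\eqref{Galerkin0}. Plugging $v=Au^0_{m,h}(t)$ in and using self-adjointness of $A^{1/2}$ yields, for $t<\tau^0_{m,h}$,
\[
\tfrac12\tfrac{d}{dt}\|u^0_{m,h}(t)\|^2
 = -\langle B(u^0_{m,h}(t))\,,\,Au^0_{m,h}(t)\rangle
 + \bigl(A^{1/2}\tilde\s_0(t,u^0_{m,h}(t))h(t)\,,\,A^{1/2}u^0_{m,h}(t)\bigr).
\]
The crucial ingredient is the algebraic hypothesis $a(1+\mu^2)+b\mu^2=0$: by~\eqref{B(u)AuG} it is equivalent to $\langle B(u),Au\rangle=0$ for every $u\in V$, so the nonlinear term vanishes identically. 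This is the entire point of the assumption; without it, no $V$-estimate is available in the inviscid regime since there is no regularizing term $-\nu\|u\|^2$ to absorb the trilinear contribution.

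\textbf{Closing the estimate.} For the shift term, Cauchy-Schwarz together with Young's inequality and the growth bound~\eqref{growthbis} at $\nu=0$ give
\[
\tfrac{d}{dt}\|u^0_{m,h}(t)\|^2
\leq \tilde K_0\,|h(t)|_0^2 + \bigl(1+\tilde K_1|h(t)|_0^2\bigr)\,\|u^0_{m,h}(t)\|^2.
\]
Because $h\in\mathcal{A}_M$ forces $\int_0^T|h(s)|_0^2\,ds\leq M$ a.s., Gronwall's lemma produces
\[
\|u^0_{m,h}(t)\|^2 \leq \bigl(\|\xi\|^2+\tilde K_0\,M\bigr)\exp\bigl(T+\tilde K_1 M\bigr)
\quad\text{for all }t\in[0,\tau^0_{m,h}),
\]
with a bound independent of $m$ and of $\tau^0_{m,h}$. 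This proves~\eqref{boundu0nh} with, for instance, $C(M)=(1\vee\tilde K_0 M)\,e^{T+\tilde K_1 M}$, and the previous paragraph promotes the solution to $\mathcal{C}([0,T],H_m)$.

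\textbf{Expected obstacle.} The only genuine difficulty is handling $\langle B(u),Au\rangle$; once the algebraic condition~\eqref{B(u)AuG} is invoked this term disappears and the rest is a standard Gronwall argument using (C3). Everything else (existence, uniqueness on the maximal interval, the passage from $V$-control to $H$-control preventing blow-up) is routine finite-dimensional ODE theory applied in $H_m$.
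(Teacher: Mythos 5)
Your proof is correct and follows essentially the same route as the paper's: test the Galerkin equation with $Au^0_{m,h}$, kill the nonlinear term via the algebraic identity \eqref{B(u)AuG}, bound the shift term using \eqref{growthbis} at $\nu=0$, and close with Gronwall, after which the uniform $V$-bound rules out finite-time blow-up in $H_m$. The only (immaterial) difference is that the paper works on $[0,\tau_N]$ via a stopping-time argument and applies Cauchy--Schwarz without Young's inequality, whereas you argue directly on the maximal interval after a Young step; both lead to the same uniform estimate.
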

\begin{proof}
The proof is included for the sake of completeness; the arguments are similar to that
in the classical viscous framework. Let $h\in {\mathcal A}_M$ and let  $u^0_{m,h}(t)$ be the
approximate maximal solution to \eqref{Galerkin0} described above.
 For every $N>0$, set
$ % \begin{equation*}% \label{TauN}
\tau_N = \inf\{t:\; \|u^0_{m,h}(t)\| \geq N \}\wedge T.
$ %\end{equation*}
Let $\Pi_m : H_0\to H_0$ denote the projection operator defined by
$\Pi_m u =\sum_{k=1}^m \big( u \, ,\, e_k\big) \, e_k$, where
 $\{e_k, k\geq 1\}$ is the orthonormal basis of $H$  made by
 eigen-elements of the covariance operator
 $Q$ and used in  \eqref{W-n}.
\par
Since $\varphi_k\in Dom(A)$ and $V$ is a Hilbert space,   $P_m$ contracts the $V$ norm and
commutes with $A$. Thus, using {\bf (C3)} and  \eqref{B(u)AuG},  we deduce
\begin{align*}
 \|u^0_{m,h}(t\wedge \tau_N)\|^2   % =&\;  \|P_m \xi\|^2
% -2\int_0^{t\wedge \tau_N}\!\!
%  \big( A^{\frac{1}{2}} B(u^0_{m,h}(s))\, ,\, A^{\frac{1}{2}} u^0_{m,h}(s)\big)\,  ds \\
%  & + 2 \int_0^{t\wedge \tau_N}
% \big( A^{\frac{1}{2}} P_m \tilde{\sigma}_{0,m} (s,u^0_{m,h}(s))
% h(s)
%  , A^{\frac{1}{2}} u^0_{m,h}(s)\big) \, ds \\ % \nonumber \\
\leq &\; \|\xi\|^2 - 2\int_0^{t\wedge \tau_N}\!\! \big( B(u^0_{m,h}(s))\, ,\, A u^0_{m,h}(s)\big)\,  ds \\ %\nonumber\\
& + 2  \int_0^{t\wedge \tau_N}
 \big|  A^{\frac{1}{2}} P_m \tilde{\sigma}_{0,m} (s,u^0_{m,h}(s)) h(s)\big|\, \|u^0_{m,h}(s)\|\, ds \\
%  \leq & \; \|\xi\|^2 + 2  \int_0^{t\wedge \tau_N} \Big[
%  \sqrt{\tilde{K}_0}  + \sqrt{\tilde{K}_1} \|u^0_{m,h}(s)\|\Big]\,
%  |h(s)|_0 \, \|u^0_{m,h}(s)\| \, ds
%  \\
\leq & \; |\xi\|^2 + 2\sqrt{\tilde{K}_0 M T} + 2\Big(
\sqrt{\tilde{K}_0}  + \sqrt{\tilde{K}_1}\Big) \int_0^{t\wedge
\tau_N} |h(s)|_0\, \|u^0_{m,h}(s)\|^2\, ds.
\end{align*}
Since the map $\|u^0_{m,h}(.)\|$ is bounded on $[0,\tau_N]$,
Gronwall's lemma implies that for every $N>0$,
\begin{equation}\label{Galerkin4}
\sup_m \sup_{t\leq \tau_N} \|u^0_{m,h}(t)\|^2  \leq \Big(
\|\xi\|^2 + 2\sqrt{\tilde{K}_0 M T}\Big) \, \exp\Big( 2\sqrt{M T}\,
\Big[ \sqrt{\tilde{K}_0 }+ \sqrt{\tilde{K}_1}\Big]\Big).
\end{equation}
 Let  $\tau:=\lim_N \tau_N$ ;  as $N\to \infty$ in \eqref{Galerkin4} we deduce
 
\begin{equation}\label{GalerkinT}
\sup_m  \sup_{t\leq \tau} \|u^0_{m,h}(t)\|^2  \leq \Big( \|\xi\|^2 + 2\sqrt{\tilde{K}_0 M T}\Big)
\, \exp\Big( 2 \sqrt{M T}\, \Big[ \sqrt{\tilde{K}_0 }+ \sqrt{\tilde{K}_1}\Big]\Big).
\end{equation}
On the other hand,  $\sup_{t\leq \tau} \|u^0_{m,h}(t)\|^2 = +\infty$ if $\tau<T$,   which 
contradicts the estimate (\ref{GalerkinT}) . Hence $\tau = T$ a.s.   and we get
\eqref{boundu0nh} which completes the proof of the Lemma.
\end{proof}

 We now prove the main result of this section.
\par
\noindent \textbf{Proof of Theorem \ref{exisuniq0}: }\\
\noindent \textbf{Step 1: }  Using the
estimate \eqref{boundu0nh} and the growth condition
\eqref{tilde-s-b} we conclude that each component of the sequence
$\big( (u^0_{m,h})_n, n\geq 1\big)$ satisfies the following
estimate
\begin{equation*}
\sup_m\,  \sup_{0\leq t\leq T} |(u^0_{m,h})_{n}(t)|^2 +
\big|\big(\tilde{\sigma}_0(t,u^0_{m,h}(t)) h(t)\big)_n\big|\leq
C\; a.s. \, , \forall  n=1, 2, \cdots
\end{equation*}
for some constant $C>0$ depending only on $M, \|\xi\|, T$.
Moreover, writing the equation \eqref{u0h}  for the GOY shell model
 in the componentwise
form  using  \eqref{GOY} (the proof for the Sabra shell model using
\eqref{Sabra}, which is similar, is omitted), we obtain for $n=1, 2, \cdots$
\begin{align}\label{GOY-u0mnh}
(u^0_{m,h})_{n}(t)=&(P_{m}\xi)_n+i\int_{0}^{t}( a k_{n+1}
(u^0_{m,h})_{n+1}^* (s)(u^0_{m,h})_{n+2}^*(s) +b k_{n}
(u^0_{m,h})_{n-1}^*(s) (u^0_{m,h})_{n+1}^*(s) \nonumber\\
&-a k_{n-1} (u^0_{m,h})_{n-1}^*(s) (u^0_{m,h})_{n-2}^*(s) -b
k_{n-1} (u^0_{m,h})_{n-2}^*(s) (u^0_{m,h})_{n-1}^*(s))ds\nonumber\\
&+\int_{0}^{t}\big( \tilde{\s}_0(s,u^0_{m,h}(s))\, h(s)\big)_n ds\, .
\end{align}
Hence, we deduce  that for every $n\geq 1$ there exists a constant
$C_{n}$, independent of $m$, such that
$$\|(u^0_{m,h})_{n}\|_{C^{1}\left([0,T];\mathbb{C}\right)}\leq
C_{n}.$$
Applying the Ascoli-Arzel\`a theorem, we conclude that for every
$n$ there exists a subsequence $(m^{n}_{k})_{k\geq 1}$ such that
$(u^0_{m^{n}_{k},h})_{n}$ converges uniformly to some
$(u^0_{h})_{n}$ as $k\longrightarrow\infty$. By a diagonal
procedure, we may choose a sequence $(m^{n}_{k})_{k\geq 1}$
independent of $n$ such that $(u^0_{m,h})_{n}$ converges uniformly
to some $(u^0_{h})_{n}\in {\mathcal C}\left([0,T];\mathbb{C}\right)$  for
every $n\geq 1$; set
$$u^0_{h}(t)=((u^0_{h})_{1}, (u^0_{h})_{2},\dots).$$
From the estimate \eqref{boundu0nh}, we have the weak star
convergence in $L^{\infty}(0,T; V)$ of some further subsequence of  $\big(
u^0_{m^{n}_{k},h}\,: \, {k\geq 1})$.  The weak limit belongs to
$L^{\infty}(0,T; V)$ and has clearly $(u^0_{h})_{n}$ as components
that belong to ${\mathcal C}\left([0,T];\mathbb{C}\right)$ for every integer
$n\geq 1$. Using the uniform convergence of each component, it is
easy to show, passing to the limit in the expression
\eqref{GOY-u0mnh}, that $u^0_{h}(t)$ satisfies the weak form of
the GOY shell model equation \eqref{u0h}. Finally, since
\[ u^0_h(t)=\xi + \int_0^t \big[ -B(u^0_h(s)) + \tilde{\sigma}_0(s,u^0_h(s)) h(s)\big]\, ds, \]
is such that $\sup_{0\leq s\leq T} \|u^0_h(s)\|<\infty$  a.s.
and since for every $s\in [0,T]$, by \eqref{boundB1} and \eqref{growthbis}
we have a.s.
\[ \big[ \|B(u^0_h(s))\|+ \|\tilde{\sigma}_0(s,u^0_h(s))
h(s)\|\big] \leq C \Big( 1+\sup_{0\leq s\leq T} \|u^0_h(s)\|^2 \big)
\big(1+|h(s)|_0\Big)\in L^2([0,T]),\]
we deduce that $u^0_h\in {\mathcal C}([0,T],V)$ a.s.
\smallskip

\noindent \textbf{Step 2: } To complete the proof of Theorem
\ref{exisuniq0}, we  show
 that
the solution  $u_h^0$ to \eqref{u0h} is unique in ${\mathcal
C}([0, T], V) $.
 Let $v \in \mathcal{C} ([0,T],V)$ be another solution to \eqref{u0h} and set
 \[
\tau_N =\inf \{t \geq 0: \|u_h^0(t) \| \geq N \} \wedge \inf \{t
\geq 0: \|v(t) \| \geq N \} \wedge T.
\]
Since $\|u_h^0(.) \|$ and $\|v(.)\|$ are
 bounded on $[0,T]$,  we have    $\tau_N \to T$  as $N\to \infty$.

Set $U=u^0_h-v$; equation \eqref{diffBV} implies
\begin{align*}
 \big| \big(  A^{\frac{1}{2}} B(u_h^0(s))- A^{\frac{1}{2}} B(v(s)),A^{\frac{1}{2}} U(s)\big)\big| &  =
\big| \big(   B(u_h^0(s))-  B(v(s)),A U(s)\big) \big| \\
& \leq  \bar{C}_1 \| U(s) \|^2\, \|v(s)\|.
\end{align*}
On the other hand, the Lipschitz property \eqref{Lipbis} from condition {\bf (C3)} for $\nu=0$ implies
\[   \big| \big[ A^{\frac{1}{2}} \tilde{\s}_0(s,u_h^0(s))- A^{\frac{1}{2}} \tilde{\s}_0(s,v(s))\big] h(s) \big|
\leq \sqrt{\tilde{L}_1}  \|u_h^0(s)-v(s)\| \, |h(s)|_0.
\]
\par
\noindent Therefore,
\begin{align*}
 \|U(t\wedge \tau_N)\|^2 \;\; & =
 % \int_0^{t\wedge \tau_N}\Psi(s) ds,$
 \int_0^{t\wedge \tau_N}\!\!
  \Big\{  - 2 \Big(  A^{\frac{1}{2}} B(u_h^0(s))- A^{\frac{1}{2}} B(v(s)),A^{\frac{1}{2}} U(s)\Big)  \\
&\qquad \qquad  +  2 \Big( [ A^{\frac{1}{2}} \tilde{\s}_0(s,u_h^0(s))-
A^{\frac{1}{2}} \tilde{\s}_0(s,v(s))] h(s),
A^{\frac{1}{2}} U(s)  \Big) \Big\}\, ds\\
&\leq  2 \int_0^t
\big( \bar{C}_1\, N + \sqrt{L}_1|h(s)|_0\big) \, \|U(s\wedge
\tau_N)\|^2\, ds,
\end{align*}
and Gronwall's lemma implies that (for almost every $\omega$) $
\sup_{0\leq t\leq T}\|U(t\wedge \tau_N)\|^2=0$ for every $N$. As
$N\to \infty$, we  deduce that a.s. $U(t)=0$ for every $t$, which
concludes the proof. \hfill $\Box$
\bigskip

We now suppose that the diffusion coefficient $\s_\nu$ satisfies the following condition {\bf (C4)} which
strengthens {\bf (C1)} in the way {\bf (C3)} strengthens {\bf (C2)}, i.e.,

\noindent {\bf Condition (C4)} There exist constants $K_i$ and $L_i$,
$i=0,1,2$, $j=1,2$, such that for any
$\nu >0$ and $u\in Dom(A)$,
\begin{eqnarray}\label{bnd-sbis}
|A^{\frac{1}{2}}\s_\nu(s,u)|^2_{L_Q} &\leq & K_0+K_1 \|u\|^2 + K_2|Au|^2 ,\\
|A^{\frac{1}{2}}\s_\nu(s,u)-A^{\frac{1}{2}}\s_\nu(s,v) |^2_{L_Q}
&\leq & L_1 \|u-v\|^2 + L_2 |Au-Av|^2 . \label{lip-sbis}
\end{eqnarray}
Then for $\nu>0$, the existence result and apriori bounds of the solution to \eqref{uhnu} proved
in Proposition \ref{unifnu} can be improved as follows.
\begin{prop} \label{unifnu-bis}
Let $\xi\in V$,  let the coefficients $a,b,\mu$ defining $B$ be such that $a(1+\mu^2) +b\mu^2 =0$,
 $\s_\nu$ and ${\tilde \s}_{\nu}$ satisfy the conditions
{\bf (C1)}, {\bf (C2)}, {\bf (C3)} and {\bf (C4)}. Then there exist positive constants
$\bar{K}_2$ and $\nu_0$ such that for $0<K_2<\bar{K}_2$ and $0< \nu\leq \nu_0$,
for every $M>0$ there exists a constant $C(M)$ such that for any  $h\in {\mathcal A}_M$,
the solution $u^\nu_h$ to \eqref{uhnu}
belongs to ${\mathcal C}([0,T],V)$ almost surely and
\begin{equation}\label{bound-V}
\sup_{h\in {\mathcal A}_M}\, \sup_{0 < \nu\leq \nu_0} \EX \Big( \sup_{t\in [0,T]} \|u_h^\nu(t)\|^2 +
\nu \int_0^T |Au^\nu_h(t)|^2 \, dt \Big) \leq C(M).
\end{equation}
\end{prop}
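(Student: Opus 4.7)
The plan is to refine the a priori bound of Proposition \ref{unifnu} by testing the equation against $Au^\nu_h$. Since the solution $u^\nu_h$ given by Proposition \ref{unifnu} is only known a priori in $\mathcal C([0,T],H) \cap L^2([0,T],V)$, I would work with Galerkin approximations $u^\nu_{m,h}$, where $P_m$ is the orthogonal projection onto the span of the first $m$ eigenvectors of $A$ (see \eqref{HPm}); then $P_m$ commutes with $A$, is a contraction on every $\mathcal H_\alpha$, and $u^\nu_{m,h}$ takes values in $Dom(A)$, so all the manipulations below are licit. The existence of $u^\nu_{m,h}$ and its convergence to $u^\nu_h$ follow as in \cite{CM}, so only the improved a priori bound requires new input.

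First, I would apply It\^o's formula in finite dimension to $t \mapsto \|u^\nu_{m,h}(t\wedge\tau_N)\|^2$, with $\tau_N = \inf\{t : \|u^\nu_{m,h}(t)\| \geq N\}\wedge T$. Self-adjointness of $A$ together with the crucial cancellation $(B(u),Au)=0$ for $u\in V$, granted by the hypothesis $a(1+\mu^2)+b\mu^2=0$ via \eqref{B(u)AuG}, eliminate the nonlinear contribution, yielding
\begin{align*}
\|u^\nu_{m,h}&(t\wedge\tau_N)\|^2 + 2\nu\int_0^{t\wedge\tau_N} |Au^\nu_{m,h}(s)|^2\, ds
= \|P_m\xi\|^2 + \nu\int_0^{t\wedge\tau_N} |A^{\frac{1}{2}}\sigma_\nu(s,u^\nu_{m,h}(s))|^2_{L_Q}\, ds \\
& \quad + 2\int_0^{t\wedge\tau_N} \!\! \big(A^{\frac{1}{2}}\tilde\sigma_\nu(s,u^\nu_{m,h}(s)) h(s),\, A^{\frac{1}{2}} u^\nu_{m,h}(s)\big)\, ds + \mathcal M_m(t\wedge\tau_N),
\end{align*}
where $\mathcal M_m$ is the local martingale coming from $2\sqrt{\nu}\int (A^{\frac{1}{2}}\sigma_\nu\, dW, A^{\frac{1}{2}} u^\nu_{m,h})$.

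Next, I would bound the right-hand side using the reinforced hypotheses. By \textbf{(C4)} the It\^o correction is dominated by $\nu[K_0 + K_1\|u^\nu_{m,h}\|^2 + K_2|Au^\nu_{m,h}|^2]$. By \textbf{(C3)}, Cauchy-Schwarz, and Young with parameter $\epsilon>0$, the control term is controlled by $C(\sqrt{\tilde K_0}+\sqrt{\tilde K_1})\,|h|_0(1+\|u^\nu_{m,h}\|^2) + (\tilde K_2/(2\epsilon))|h|_0^2 \|u^\nu_{m,h}\|^2 + \epsilon\nu|Au^\nu_{m,h}|^2$. The martingale $\mathcal M_m$ is treated by Burkholder-Davis-Gundy, \textbf{(C4)}, and Young's inequality in the spirit of \eqref{estimate3}, producing a further $\delta\mathbb E\big(\nu\int_0^{t\wedge\tau_N}|Au^\nu_{m,h}|^2\, ds\big)$ plus lower-order terms. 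Choosing $\bar K_2,\epsilon,\delta$ small enough and $\nu_0$ small enough that $2K_2+\epsilon+\delta<2$, all the $\nu|Au^\nu_{m,h}|^2$ contributions can be absorbed on the left. The residue fits the hypotheses of Lemma 3.2 of \cite{CM}, with a random coefficient $\psi(s)=C(1+|h(s)|_0+|h(s)|_0^2)$ whose $L^1([0,T])$-norm is bounded deterministically by $\Phi(M)$ since $\int_0^T|h|_0^2\, ds \leq M$ on $\mathcal A_M$. This delivers
\[ \mathbb E\Big(\sup_{t\leq T}\|u^\nu_{m,h}(t\wedge\tau_N)\|^2 + \nu\int_0^{t\wedge\tau_N}|Au^\nu_{m,h}(s)|^2\, ds\Big) \leq C(M)(1+\|\xi\|^2), \]
uniformly in $m, N$, $h\in\mathcal A_M$, and $\nu\in(0,\nu_0]$.

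Finally, monotone convergence in $N$ removes the localization and a standard Galerkin limit $m\to\infty$, combined with the uniqueness granted by Proposition \ref{unifnu} and lower semicontinuity of the norms, identifies the limit with $u^\nu_h$ and preserves the bound \eqref{bound-V}. For $V$-continuity, once $u^\nu_h\in L^\infty([0,T],V)\cap L^2([0,T],Dom(A))$ a.s., the drift $\nu Au^\nu_h+B(u^\nu_h)-\tilde\sigma_\nu h$ lies in $L^2([0,T],H)$ a.s., while the stochastic integral $\sqrt{\nu}\int_0^\cdot \sigma_\nu\, dW$ is $V$-valued and continuous by \textbf{(C4)} and Doob's inequality; hence $u^\nu_h\in\mathcal C([0,T],V)$ a.s. The main obstacle is the coupled absorption into $2\nu|Au^\nu_{m,h}|^2$: the coefficient $K_2$ from \textbf{(C4)}, the Young parameter $\epsilon$ controlling the $\tilde K_2$-term in \textbf{(C3)}, and the BDG fraction $\delta$ all multiply $\nu|Au^\nu_{m,h}|^2$ on the right, so one must carefully assemble the smallness thresholds on $\bar K_2$ and $\nu_0$ so that their total remains $<2$ uniformly in $\nu\in(0,\nu_0]$ and $h\in\mathcal A_M$.
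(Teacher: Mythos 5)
Your proposal is correct and follows essentially the same route as the paper: Galerkin approximations on which $P_m$ commutes with $A$, It\^o's formula applied to $\|u^\nu_{m,h}\|^2$ with a stopping time, cancellation of $(B(u),Au)$ via \eqref{B(u)AuG}, absorption of all $\nu |Au|^2$ contributions from \textbf{(C3)}, \textbf{(C4)} and the BDG estimate into the left-hand side for small $\bar K_2$, $\epsilon$, $\delta$ and $\nu_0$, then Lemma 3.2 of \cite{CM} with $\varphi$ deterministically $L^1$-bounded via $h\in\mathcal A_M$, followed by monotone convergence in $N$ and the Galerkin limit. The only presentational difference is that you spell out the final $V$-continuity argument (drift in $L^2([0,T],H)$, $V$-valued continuous stochastic integral), which the paper leaves implicit in the weak/weak-star passage to the limit.
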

\begin{proof}
Fix $m\geq 1$, let $P_m$ be defined by \eqref{HPm} and let  $u_{m,h}^\nu(t)$ be the approximate maximal
solution to the (finite dimensional) evolution equation:
$u^\nu_{m,h}(0) = P_m \xi$ and
\begin{eqnarray}\label{Galerkin-nu}
d u^\nu_{m,h}(t) &=& \big[-\nu P_m A u^\nu_{m,h}(t) - P_m
B(u^\nu_{m,h}(t)) + P_m \tilde{\s}_\nu(t,u^\nu_{m,h}(t))\, h(t)
\big] dt
\nonumber \\
&& + P_m \sqrt{\nu} \, \s_\nu (t,u^\nu_{m,h})(t) dW_m(t),
\end{eqnarray}
where $W_m$ is defined by \eqref{W-n}.  Proposition 3.3 in \cite{CM} proves that \eqref{Galerkin-nu} has a
unique  solution $u^\nu_{m,h} \in {\mathcal C}([0,T],P_m(H))$.
 For every $N>0$, set
\begin{equation*}% \label{TauN}
\tau_N = \inf\{t:\; \|u_{m,h}^\nu(t)\| \geq N \}\wedge T.
\end{equation*}
Since $P_m(H)\subset Dom(A)$, we may apply It\^o's formula to
$\|u^\nu_{m,h}(t)\|^2$. Let $\Pi_m : H_0 \to H_0$ be defined by
$\Pi_m u = \sum_{k=1}^m \big( u,e_k\big)\, e_k$ for some
orthonormal basis $\{ e_k, k\geq 1\}$ of $H$ made by eigen-vectors
of the covariance operator $Q$; then we have:
\begin{align*}% \label{Galerkin3}
& \|u_{m,h}^\nu(t\wedge \tau_N)\|^2 = \|P_m \xi\|^2 + 2\sqrt{\nu}
\int_0^{t\wedge \tau_N}\!\!
 \big(A^{\frac{1}{2}}  P_m\s_\nu(s, u_{m,h}^\nu(s))  dW_m(s) , A^{\frac{1}{2}} u^\nu_{m,h}(s)\big)
 \\
&  \;  + \nu  \int_0^{t\wedge \tau_N} \!\!
|P_m\s_\nu(s,u^\nu_{m,h}(s))\, \Pi_m|_{L_Q}^2\, ds -2
\int_0^{t\wedge \tau_N}\!\! \big( A^{\frac{1}{2}} B(
u^\nu_{m,h}(s)) \, ,\,
 A^{\frac{1}{2}} u^\nu_{m,h}(s)\big)\,  ds  \\
& -2 \nu\!  \int_0^{t\wedge \tau_N}\!\!\!\! \big( A^{\frac{1}{2}}P_m
A u^\nu_{m,h}(s) ,
 A^{\frac{1}{2}} u^\nu_{m,h}(s)\big)  ds
% \nonumber \\ &
+ 2\int_0^{t\wedge \tau_N}\!\!\!\! \big( A^{\frac{1}{2}} P_m
\tilde{\s}_\nu(s,u^\nu_{m,h}(s)) h(s) ,
 A^{\frac{1}{2}}  u^\nu_{m,h}(s)\big)  ds.
\end{align*}
Since the functions $\varphi_k$ are eigen-functions of $A$, we
have $A^{\frac{1}{2}} P_m = P_m A^{\frac{1}{2}}$ and hence $\big(
A^{\frac{1}{2}} P_m Au^\nu_{m,h}(s),A^{\frac{1}{2}}
u^\nu_{m,h}(s)\big) = |Au^\nu_{m,h}(s)|^2$. Furthermore, $P_m$
contracts the $H$ and the $V$ norms, and for $u\in Dom(A)$, $\big(
B(u), Au\big) =0$ by \eqref{B(u)AuG}.
 Hence  for $0<\epsilon =\frac{1}{2} (2-K_2)<  1$, using  Cauchy-Schwarz's inequality  and
the conditions {\bf (C3)} and {\bf (C4)} on the coefficients $\s_\nu$ and $\tilde{\s}_\nu$,  we
deduce
\begin{align*}% \label{Galerkin3}
& \|u_{m,h}^\nu(t\wedge \tau_N)\|^2
 + \epsilon \nu  \int_0^{t\wedge \tau_N}\!\! \big| Au^\nu_{m,h}(s)|^2\, ds
\leq  \| \xi\|^2
+ \nu  \int_0^{t\wedge \tau_N} \!\! \big[ K_0 + K_1 \|u^\nu_{m,h}(s)\|^2\big] \, ds \\
&\quad + 2  \sqrt{\nu}  \int_0^{t\wedge \tau_N}\!\!
 \big(A^{\frac{1}{2}}  P_m \s_\nu(s, u_{m,h}^\nu(s))  dW_m(s) , A^{\frac{1}{2}} u^\nu_{m,h}(s)\big)
 \\
&  \quad  + 2 \int_0^{t\wedge \tau_N} \!\! \Big\{  \Big[
\sqrt{\tilde{K}_0} + \Big( \sqrt{\tilde{K}_0} +
\sqrt{\tilde{K}_1}\Big) \|u^\nu_{m,h}(s)\|^2 \Big] |h(s)|_0 + \frac{
\tilde{K}_2}{\epsilon} |h(s)|_0^2 \| u^\nu_{m,h}(s)\|^2 \Big\} \,
ds.
\end{align*}
For any $t\in [0,T]$ set
\begin{eqnarray*}
 I(t)&=&\sup_{0\leq s\leq t}\Big|2\sqrt{\nu}
   \int_0^{s\wedge \tau_N}\!\!
 \big(A^{\frac{1}{2}}  P_m\s_\nu(r, u_{m,h}^\nu(r))  dW_m(r)
\, ,\,  A^{\frac{1}{2}} u^\nu_{m,h}(r)\big)\Big|,\\
X(t)&=&\sup_{0\leq s\leq t} \|u^\nu_{m,h}(s\wedge\tau_N)\|^2,
\quad Y(t)= \int_0^{t\wedge\tau_N}
|Au^\nu_{m,h}(r)|^2\, dr,\\
\varphi(t)&=& 2  \Big( \sqrt{\tilde{K}_0} +  \sqrt{\tilde{K}_1}\Big) \, |h(t)|_0 + \nu K_1 +
 \frac{\tilde{K}_2}{\epsilon} |h(t)|_0^2.
 \end{eqnarray*}
Then almost surely, $\int_0^T \varphi(t)\, dt \leq \nu K_1 T +  2\big(
  \sqrt{\tilde{K}_0} +  \sqrt{\tilde{K}_1}\big) \sqrt{MT}  + \frac{ \tilde{K}_2}{\epsilon}M
:=C$.
The Burkholder-Davis-Gundy inequality,  conditions ({\bf C1}) -- {\bf (C4)},  Cauchy-Schwarz
and Young's  inequalities 
yield   that for $t\in[0, T]$ and $\beta >0$,
\begin{align*} % \label{estimate3-bis}
\EX I(t)   & \; \leq \; 6 \sqrt{\nu} \, \EX \Big\{ \int_0^{t\wedge
\tau_N} \big|A^{\frac{1}{2}} \s_\nu(s,u_{m,h}^\nu(r))\; \Pi_m
|^2_{L_Q}\; \|u^\nu_{m,h}(s)\|^2
 ds \Big\}^\frac12    \\  % \nonumber \\
& \leq  \; \beta \;\EX  \Big(\sup_{0\leq s\leq t\wedge\tau_N}
\|u_{m,h}(s)\|^2 \Big) +  \frac{9 \nu K_1 }{\beta}   \;\EX
 \int_0^{t\wedge \tau_N} \| u_{m,h}(s)\|^2\,   ds\\ %  \nonumber \\
&\qquad   + \frac{9 \nu K_0}{\beta}\;T   + \frac{9\nu K_2}{\beta}
\EX \int_0^{t\wedge \tau_N} |Au^\nu_{m,h}(s)|^2 ds.
\end{align*}
Set $Z=\|\xi\|^2 + \nu_0 K_0 T +  2\sqrt{\tilde {K}_0 T M} $,
$\alpha = \epsilon \nu$,  $\beta=2^{-1}e^{-C}$, $K_2 < 2^{-2} e^{-2C} (9+2^{-3}e^{-2C})^{-1}$;
 the previous inequality implies
that the bounded function $X$ satisfies a.s. the inequality
\[
X(t)+ \alpha Y(t)  \leq Z+ I(t) + \int_0^t \varphi(s)\, X(s)\, ds  .
\]
Furthermore, $I(t)$ is non decreasing, such that for $0<\nu\leq \nu_0$, $\delta =
  \frac{9\nu K_2}{\beta} \leq \alpha 2^{-1} e^{-C}$  and  $\gamma = \frac{9\nu_0}{a} K_1$, one has
\[ \EX I(t)\leq \beta \EX X(t) +\gamma \EX \int_0^t X(s)\, ds +
 \delta Y(t)+ \frac{9\nu_0}{\beta} K_0 T.
\]
Lemma  3.2 from \cite{CM} implies that for  ${K}_2$  and $\nu_0$
small enough, there exists a constant $C(M,T)$ which does not depend on $m$ and $N$, and
	such that for $0<\nu\leq \nu_0$, $m \geq 1$ and $h\in {\mathcal A}_M$:
\[ %begin{equation}\label{bound-Galer-bis}
\sup_{N>0} \, \sup_{m\geq 1} \EX\Big[ \sup_{0\leq t\leq \tau_N}
\|u^\nu_{m,h}(t)\|^2 + \nu \int_0^{\tau_N} |Au^\nu_{m,h}(t)|^2 \,
dt\Big]  < \infty.
\] % end{equation}
Then, letting $N\to \infty$ and using the monotone convergence theorem, we deduce that
\begin{equation}\label{bound-Galer-bis}
\sup_{m\geq 1}\, \sup_{h\in {\mathcal A}_M}  \EX\Big[ \sup_{0\leq
t\leq T} \|u^\nu_{m,h}(t)\|^2 + \nu \int_0^T |Au^\nu_{m,h}(t)|^2
\, dt \Big] < \infty.
\end{equation}
Then using classical arguments we prove the existence of a
subsequence of $(u^\nu_{m,h}, m\geq 1)$ which converges weakly in
$L^2([0,T]\times \Omega , V) \cap L^4([0,T]\times \Omega ,
{\mathcal H})$ and converges weak-star in $L^4(\Omega,
L^\infty([0,T],H))$ to the solution $u^\nu_h$ to equation
\eqref{uhnu} (see e.g. \cite{CM}, proof of Theorem 3.1). In order
to complete the proof, it suffices to extract a further
subsequence of $(u^\nu_{m,h}, m\geq 1)$ which is weak-star
convergent to the same limit $u^\nu_h$ in $L^2(\Omega,
L^\infty([0,T],V))$ and converges weakly in $L^2(\Omega\times
[0,T], Dom(A))$; this is a straightforward consequence of
\eqref{bound-Galer-bis}. Then  as $m\to  \infty$ in
\eqref{bound-Galer-bis}, we conclude the proof of \eqref{bound-V}.
\end{proof}

\section{Large deviations}  \label{s4}
%In this section, we assume that $\langle B(u,u),Au\rangle =0$ for every $u\in V$.
We will prove a  large deviation principle using  a    weak convergence approach
  \cite{BD00, BD07}, based on variational representations of
  infinite dimensional    Wiener processes. Let
 $\sigma : [0,T]\times V\to L_Q$ and  for every $\nu>0$ let
$\bar{\s}_\nu : [0,T]\times Dom(A) \to L_Q$ satisfy the
following condition:
%\smallskip

\noindent {\bf Condition (C5)}: \\
{\it (i)  There exist a positive constant $\gamma$ and
 non negative  constants $\bar{C}$, $\bar{K}_0$, $\bar{K}_1$  and $\bar{L}_1$ such that
for all $u,v \in V$  and   $s, t\in [0,T]$:
\begin{align*}
| \s(t,u)|_{L_Q}^2
\leq \bar{K}_0 + \bar{K}_1 \, |u|^2 , &\quad
 \big| A^{\frac{1}{2}} \s(t,u)\big|^2_{L_Q} \leq \bar{K}_0  +\bar{K}_1\, \|u\|^2,
\\ %  \label{growth1_C5} \\
|\s(t,u)- \s(t,v)|^2_{L_Q} \leq \bar{L}_1\, |u-v |^2 ,   &\quad
\big|  A^{\frac{1}{2}}\s(t,u) -  A^{\frac{1}{2}}\s(t,v)\big|^2_{L_Q}
\leq \bar{L}_1\, \|u-v\|^2 , \\ % \label{Lip1u_C5} \\
\big| \s(t,u) -   \s(s,u)\big|_{L_Q} & \leq C\, (1+\|u\|)\, |t-s|^\gamma.
 %   \label{Lip1H_C5} %\\
%\big| A^{\frac{1}{2}}  \s(t,u) - A^{\frac{1}{2}}  \s(t,u)\big|_{L_Q} & \leq C\, (1+\|u\|)\, |t-s|^\gamma .
%\label{Lip1V_C5}
\end{align*}
(ii) There exist     a positive constant $\gamma$ and
non negative constants $\bar{C}$, $\bar{K}_0$, $\bar{K}_{\mathcal H}$,
$\bar{K}_2$ and $\bar{L}_2$ such that for
 $\nu>0$,  $s,t\in [0,T]$ and $u,v\in Dom(A)$,
\begin{align*}
| \bar{\s}_\nu(t,u)|_{L_Q}^2
\leq \big( \bar{K}_0 + \bar{K}_{\mathcal H}  \,
 \|u\|_{\mathcal H}^2 \big) ,
 &\quad  \big| A^{\frac{1}{2}} \bar{\s}_\nu(t,u)\big|^2_{L_Q} \leq \big(\bar{K}_0 +  \bar{K}_2\, |Au|^2\big) ,
\\ % \label{growth2_C5} \\
|\bar{\s}_\nu(t,u)- \bar{\s}_\nu(t,v)|^2_{L_Q} \leq {\bar L}_2\, \|u-v \|^2 ,   &\,
\big|  A^{\frac{1}{2}}\bar{\s}_\nu(t,u) -  A^{\frac{1}{2}}\bar{\s}_\nu(t,v)\big|_{L_Q}^2
\leq \bar{L}_2\, |Au-Av|^2 , \\
% \label{Lip2u_C5}  \\
\big| \bar{\s}_\nu(t,u) -   \bar{\s}_\nu(t,u)\big|_{L_Q} & \leq \bar{C}\,
(1+\|u\|)\, |t-s|^\gamma.
%\label{Lip2tV_C5}  %\\
%\big| A^{\frac{1}{2}} \bar{\s}_\nu(t,u) -  A^{\frac{1}{2}}  \bar{\s}_\nu(t,u)\big|_{L_Q} & \leq \bar{C}\,
%(1+\|u\|)\, |t-s|^\gamma.
%\label{lip2tDomA_C5}
\end{align*}
}
%\smallskip

Set
\begin{equation}\label{defs}
 \sigma_\nu  = \tilde{\s}_\nu =
 \s+\sqrt{\nu}  \bar{\s}_\nu\; % \tilde{\s}_\nu = \s_\nu+C\nu^\alpha \bar{\s}_\nu
 \;   \mbox{\rm for }\quad
\nu > 0, \quad  \mbox{\rm  and } \quad \tilde{\s}_0=\s.
\end{equation}
Then for $0\leq \nu \leq \nu_1$,
 the coefficients $\s_\nu$ and  $\tilde{\s}_\nu$ satisfy the conditions {\bf (C1)}-{\bf (C4)}
 with
\begin{align} & K_0=\tilde{K}_0=4\bar{K}_0,\;    K_1=\tilde{K}_1=2\bar{K}_1,\;   L_1=\tilde{L}_1=2 \bar{L}_1,
\;  \tilde{K}_2=2\bar{K}_2, \;  \tilde{K}_{\mathcal H}=2\bar{K}_{\mathcal H}, \nonumber \\
& K_2=2 \big[ \bar{K}_2 \vee \big( \bar{K}_{\mathcal H} k_0^{4\alpha-2}\big) \big] \nu_1 ,
\;      L_2=  2  \bar{L}_2 {\nu_1}\quad \mbox{\rm and } \tilde{L}_2= 2\bar{L}_2 . \label{constants}
 \end{align}
Proposition \ref{unifnu-bis} and Theorem \ref{exisuniq0} prove that for some $\nu_0\in ]0,\nu_1]$,
$\bar{K}_2$ and $\bar{L}_2$ small enough,   $0<\nu\leq \nu_0$
 (resp. $\nu=0$), $\xi\in V$  and $h_\nu\in {\mathcal A}_M$,  the
following equation has  a unique solution $ u_{h_\nu}^\nu$ (resp. $u^0_h$) in ${\mathcal C}(0,T],V)$:
$u^\nu_{h_\nu}(0)=u^0_h(0)=\xi$, and
\begin{align}
du^\nu_{h_\nu}(t) + \big[ \nu Au^\nu_{h_\nu} (t) + B(u^\nu_{h_\nu}(t))\big] dt
 & = \sqrt{\nu}\,  \s_\nu (t,u^\nu_{h_\nu}(t))\, dW(t) +
\tilde{\s}_\nu (t,u^\nu_{h_\nu}(t)) h_\nu(t) dt,
\label{equhnu} \\
du^0_h(t) + B(u^0_h(t))\, dt & = \sigma(t, u^0_h(t))\, h(t)\, dt.
\label{control}
\end{align}
Recall that for any $\alpha \geq 0$,  ${\mathcal H}_\alpha$ has been defined in \eqref{Halpha}
and is endowed with the norm $\|\cdot\|_\alpha$ defined in \eqref{Halpha}.
When $0\leq \alpha\leq \frac{1}{4}$, as $\nu \to 0$ we will establish a Large Deviation Principle (LDP)
in the set ${\mathcal C}([0,T],V)$ for the uniform convergence in time when $V$ is endowed  with
the norm $\|\,\cdot\,\|_\alpha$
for the family of distributions of the solutions $u^\nu$ to the evolution equation: $u^\nu(0)=\xi\in V$,
\begin{equation} \label{evolunu}
du^\nu(t) + \big[ \nu Au^\nu(t) + B(u^\nu(t))\big]\, dt  = \sqrt{\nu} \s_\nu (t,u^\nu(t))\, dW(t),
\end{equation}
whose existence and uniqueness in ${\mathcal C}([0,T],V)$
follows from Propositions  \ref{unifnu} and
\ref{unifnu-bis}. Unlike in \cite{Sundar}, \cite{DM}, \cite{MSS}  and \cite{CM},
the large deviations principle is not obtained in the
natural space, which is here  ${\mathcal C}([0,T],V)$ under the assumptions {\bf(C5)},  because
the lack of viscosity does not allow to prove that $u^0_h(t)\in Dom(A)$ for almost every $t$.

To obtain the LDP in the best possible space with the weak convergence approach, we need an extra
condition, which is part of condition {\bf (C5)} when $\alpha=0$, that is when ${\mathcal H}_\alpha=H$.
\smallskip

\noindent {\bf Condition (C6):} {\it Let $\alpha \in [0,\frac{1}{4}]$; there exists a constant $L_3$
such that for $u,v \in {\mathcal H}_\alpha$ and $t\in [0,1]$,
\begin{equation}\label{Lipalpha}
\big| A^{\alpha} \s(t,u)- A^{\alpha} \s(t,v)|_{L_Q} \leq L_3\, \|u-v\|_\alpha.
\end{equation}
}
% \smallskip

Let $\mathcal{B}$ denote the  Borel $\s-$field of the Polish space
%$X:={\mathcal C}([0,T],V)$
%endowed with the norm
\begin{equation}\label{defX}
{\mathcal X}= {\mathcal C}([0,T],V)\quad \mbox{\rm endowed with the norm}\quad
\|u\|_{\mathcal X}=:\sup_{0\leq t\leq T} \|u(t)\|_\alpha,
\end{equation}
 where $\| \, \cdot\, \|_{_\alpha}$ is  defined by
\eqref{Halpha}.
 We at first recall some classical   definitions;
by convention the infimum over an empty set is  $ +\infty$.
\begin{defn}
   The random family
$(u^\nu )$ is said to satisfy a large deviation principle on
${\mathcal X}$  with the good rate function $I$ if the following conditions hold:\\
\indent \textbf{$I$ is a good rate function.} The function  $I: {\mathcal X} \to [0, \infty]$ is
such that for each $M\in [0,\infty[$ the level set $\{\phi \in {\mathcal X}: I(\phi) \leq M
\}$ is a    compact subset of ${\mathcal X}$. \\
 For $A\in \mathcal{B}$, set $I(A)=\inf_{u \in A} I(u)$.\\
\indent  \textbf{Large deviation upper bound.} For each closed subset
$F$ of ${\mathcal X} $:
\[
\lim\sup_{\nu\to 0}\; \nu \log \PX(u^\nu \in F) \leq -I(F).
\]
\indent  \textbf{Large deviation lower bound.} For each open subset $G$
of ${\mathcal X} $:
\[
\lim\inf_{\nu\to 0}\; \nu \log \PX(u^\nu \in G) \geq -I(G).
\]
\end{defn}
Let ${\mathcal C}_0=\{ \int_0^. h(s)ds \, :\, h\in L^2([0,T], H_0)\}
\subset {\mathcal C}([0, T], H_0)$.
Given $\xi\in V$ define ${\mathcal G}_\xi^0:
  {\mathcal C}([0, T], H_0)  \to  X $   by % {\mathcal C}([0,T],{\mathcal H}_\alpha)$ by
$ {\mathcal G}_\xi^0(g)=u_h^0 $ for $  g=\int_0^. h(s)ds \in {\mathcal C}_0$ and  $u_h^0$
is the solution  to the
(inviscid) control equation \eqref{control} with initial condition $\xi$,
and ${\mathcal G}^0_\xi(g)=0$ otherwise.
 The following theorem is the main result of this section.
\begin{theorem}\label{PGDunu}
Let  $\alpha \in [0,\frac{1}{4}]$, suppose that the constants $a,b,\mu$ defining $B$ are such that
$a(1+\mu^2) +b\mu^2=0$,  let  $\xi\in V$, and  let $\s_\nu$ and $\tilde{\s}_\nu$ be defined  for $\nu>0$
by \eqref{defs} with coefficients
$\s$ and $\bar{\s}_\nu$  satisfying  the conditions ({\bf C5}) and {\bf (C6)}
for this value of $\alpha$.
Then the solution  $(u^\nu)_{\nu>0}$ to \eqref{evolunu} with initial condition $\xi$
 satisfies a large deviation principle in ${\mathcal X}:={\mathcal C}([0,T],V)$ endowed with the
norm $\|u\|_{\mathcal X} =: \sup_{0\leq t\leq T} \|u(t)\|_\alpha$,
% ${\mathcal C}([0, T] , {\mathcal H}_\alpha)$,
 with the good rate function
\begin{eqnarray} \label{ratefc}
 I (u)= \inf_{\{h \in L^2(0, T; H_0): \; u ={\mathcal G}_\xi^0(\int_0^. h(s)ds) \}}
 \Big\{\frac12 \int_0^T |h(s)|_0^2\,  ds \Big\}.
\end{eqnarray}
\end{theorem}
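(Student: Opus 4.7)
The plan is to apply the weak convergence approach to large deviations due to Budhiraja--Dupuis, as used in \cite{BD00, BD07, DM, CM}. By the Yamada--Watanabe type argument, there exists a Borel measurable map $\mathcal{G}^\nu_\xi: \mathcal{C}([0,T], H_0) \to \mathcal{X}$ such that $u^\nu = \mathcal{G}^\nu_\xi(\sqrt{\nu}\, W)$, and more generally $u^\nu_{h_\nu} = \mathcal{G}^\nu_\xi\!\left(\sqrt{\nu}\, W + \int_0^\cdot h_\nu(s)\,ds\right)$ for $h_\nu \in \mathcal{A}_M$. By the general theory, to obtain the LDP with rate function \eqref{ratefc} it suffices to verify:
\textbf{(a)} for every $M<\infty$, the level set $K_M = \{\mathcal{G}^0_\xi(\int_0^\cdot h(s)\,ds) : h\in S_M\}$ is a compact subset of $\mathcal{X}$;
\textbf{(b)} whenever $(h_\nu) \subset \mathcal{A}_M$ converges in distribution (as $S_M$-valued random elements with the weak topology) to $h$, the processes $u^\nu_{h_\nu}$ converge in distribution in $\mathcal{X}$ to $u^0_h$.

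For (a), take a sequence $h_n \in S_M$ with $h_n \rightharpoonup h$ weakly in $L^2(0,T;H_0)$. Theorem \ref{exisuniq0} yields $\sup_n \sup_{t\in[0,T]} \|u^0_{h_n}(t)\| \leq C(M)(1+\|\xi\|)$. Next I would establish a uniform time-increment bound of the form $\|u^0_{h_n}(t) - u^0_{h_n}(s)\|_\alpha \leq C(M,\xi)\, |t-s|^\gamma$ (or the analogous integral modulus) by returning to \eqref{u0h}, using \eqref{boundB1} and the growth of $\tilde{\s}_0 h$ in the $L_Q$ norm from (\textbf{C5})(i). Because the embedding $V \hookrightarrow \mathcal{H}_\alpha$ is compact for any $\alpha<\tfrac{1}{2}$ (the eigenvalues $k_n^2 \to \infty$), an Arzelà--Ascoli argument in $\mathcal{C}([0,T],\mathcal{H}_\alpha)$ gives relative compactness. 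To identify the limit, I extract a subsequence converging strongly in $\mathcal{C}([0,T],\mathcal{H}_\alpha)$ and weak-$*$ in $L^\infty(0,T;V)$; then I pass to the limit in the weak form of \eqref{u0h}, exploiting the bilinearity of $B$ together with \eqref{trili} to handle $B(u^0_{h_n})$ and the weak convergence of $h_n$ combined with the Lipschitz continuity \eqref{Lipalpha} from (\textbf{C6}) to handle the $\tilde{\s}_0(\cdot, u^0_{h_n})h_n$ term. Uniqueness from Theorem \ref{exisuniq0} forces the whole sequence to converge to $u^0_h$ in $\mathcal{X}$, yielding compactness.

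For (b), Proposition \ref{unifnu-bis} gives uniform bounds: $\EE[\sup_t \|u^\nu_{h_\nu}(t)\|^2 + \nu\int_0^T |Au^\nu_{h_\nu}(s)|^2 ds] \leq C(M)$ for $\nu\in (0,\nu_0]$. The proof of tightness in $\mathcal{X}$ follows the same pattern as in (a), now with stochastic time-increment estimates obtained from It\^o's formula applied to $\|u^\nu_{h_\nu}\|^2_\alpha$, using Burkholder--Davis--Gundy and the $A^{1/2}$-bounds on $\sigma$ and $\bar{\s}_\nu$ from (\textbf{C5}); the stochastic contribution carries a factor $\sqrt{\nu}$ and vanishes. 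Then by the Skorokhod / Jakubowski representation theorem (or, simpler, by Gyöngy--Krylov characterization of convergence in probability together with an auxiliary Skorokhod step), I pass to the limit in each term of \eqref{equhnu}: the linear drift $\nu A u^\nu_{h_\nu}$ tends to $0$ in the distributional sense thanks to the $\nu$ factor and the $L^2(\Omega;L^2(0,T;\text{Dom}(A)))$ bound; the nonlinear term $B(u^\nu_{h_\nu})$ is treated via the strong convergence in $\mathcal{C}([0,T],\mathcal{H}_\alpha)$ and \eqref{trili}; the stochastic integral $\sqrt{\nu}\int_0^\cdot \sigma_\nu(s,u^\nu_{h_\nu})\,dW$ goes to $0$ in $L^2$ by BDG and the growth part of (\textbf{C5})(i); and the control term $\tilde{\s}_\nu(s,u^\nu_{h_\nu})h_\nu\,ds = [\sigma(s,u^\nu_{h_\nu}) + \sqrt{\nu}\bar{\s}_\nu(s,u^\nu_{h_\nu})]h_\nu\,ds$ converges to $\sigma(s,u^0_h)h\,ds$ by combining strong convergence of $u^\nu_{h_\nu}$ in $\mathcal{C}([0,T],\mathcal{H}_\alpha)$, weak convergence of $h_\nu$ to $h$ in $L^2(0,T;H_0)$, the Lipschitz property (\textbf{C6}) and the uniform boundedness of $\bar{\s}_\nu$. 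Uniqueness for \eqref{control} identifies the limit as $u^0_h$.

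The main obstacle is the passage to the limit in the drift $\int_0^\cdot \tilde{\s}_\nu(s,u^\nu_{h_\nu})h_\nu\,ds$ under the \emph{weak} convergence of $h_\nu$: one needs strong convergence of $u^\nu_{h_\nu}$ in a norm in which $\sigma$ is Lipschitz, since the product ``strong $\times$ weak'' must converge. This is precisely what dictates the strength of condition (\textbf{C6}) and the loss of regularity in the topology used on $V$. The case $\alpha=\tfrac{1}{2}$ is unreachable because proving the required time-increment estimate in the full $V$-norm for the inviscid equation would demand $(B(u),Au)$-type cancellations together with $L^2$ regularity of $Au^0_h$, neither of which is available in the inviscid limit; choosing $\alpha\leq \tfrac{1}{4}$ is the sharp trade-off between the compact embedding $V \hookrightarrow \mathcal{H}_\alpha$ and the continuity estimates \eqref{trili}--\eqref{AalphaB} for $B$.
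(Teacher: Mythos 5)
Your overall structure is correct: you correctly identify the two Budhiraja--Dupuis conditions -- compactness of the level sets $K_M$ and weak convergence of the controlled solutions $u^\nu_{h_\nu}$ -- and the paper's own proof of the theorem itself is exactly this reduction, citing Propositions~\ref{compact} and~\ref{weakconv}. Where you diverge is in how these two propositions are proved, and the divergence is genuine.

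For both ingredients, you propose a \emph{tightness / Arzel\`a--Ascoli plus identification} argument: extract a subsequence converging strongly in ${\mathcal C}([0,T],{\mathcal H}_\alpha)$ (using the compact embedding $V\hookrightarrow{\mathcal H}_\alpha$ and a time-modulus bound), pass to the limit in each term of the equation, and invoke uniqueness to conclude that the whole family converges. The paper instead performs a \emph{direct Gronwall comparison}: it applies It\^o's formula to $\|u^\nu_{h_\nu}(t)-u^0_h(t)\|_\alpha^2$ (resp.\ $\|u_n(t)-u(t)\|_\alpha^2$), uses \eqref{AalphaB} and condition \textbf{(C6)} to absorb the quadratic and Lipschitz terms by Gronwall, and is left only with the ``weak $\times$ strong'' term $\int_0^t(\s(s,u^0_h(s))(h_\nu(s)-h(s)),A^{2\alpha}U_\nu(s))\,ds$. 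That term is handled by a time discretization ($\psi_n=\bar s_n$), the integral time-increment estimate of Lemma~\ref{timeincrement}, the H\"older-in-time regularity of $\s$ from \textbf{(C5)}, and the compactness of the operator $\s(t_k,u^0_h(t_k))$ to exploit the weak convergence of $h_\nu$. Your approach shifts this difficulty to the identification step: once strong convergence of $u^\nu_{h_\nu}$ in ${\mathcal C}([0,T],{\mathcal H}_\alpha)$ is in hand, the Lipschitz property \eqref{Lipalpha} of $\s$ gives strong convergence of $\s(\cdot,u^\nu_{h_\nu}(\cdot))$ and the weak convergence of $h_\nu$ suffices. Both routes are viable; the paper's buys a sharper argument that never needs tightness in the strong pathwise topology, at the cost of the rather delicate four-fold decomposition of $T_5$; yours is more standard but front-loads the work into the tightness step.

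One point you should be careful about: in step~(b) the tightness in ${\mathcal C}([0,T],{\mathcal H}_\alpha)$ is not as immediate as the sketch suggests. The drift $\nu A u^\nu_{h_\nu}$ cannot be estimated pointwise in ${\mathcal H}_\alpha$, because $\|\nu Au\|_\alpha=\nu|A^{1+\alpha}u|$ is a norm stronger than $\nu|Au|$. One has to proceed, as the paper does in Lemma~\ref{timeincrement}, by either working with the full $V$-norm (which requires the cancellation $(B(u),Au)=0$) and the $L^2$-in-time modulus on the event $G_N^\nu(T)$, or by obtaining a modulus in a weaker space such as $H$ (where $\nu\int_s^t|Au|\,dr\le\sqrt{\nu(t-s)}\,(\nu\int_0^T|Au|^2\,dr)^{1/2}$ is controlled by the uniform bound \eqref{bound-V}) and then interpolating against the uniform $V$-bound to recover ${\mathcal H}_\alpha$. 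This is doable, but it is genuine additional work and not just ``the same pattern as in~(a)''; the inviscid case~(a) has no $\nu Au$ term and no stochastic integral, so the H\"older modulus there is straightforward. Your heuristic explanation of why $\alpha\le\tfrac14$ is the sharp threshold is broadly consistent with the paper: it is exactly the range for which $\tfrac12+2\alpha\le 1$, so the term $\big(A^{1+\alpha}u,A^\alpha U\big)=\big(A^{1/2+2\alpha}u,A^{1/2}U\big)$ can still be bounded by $|Au|\,\|U\|$ and thus controlled by the $\nu\int_0^T|Au|^2\,dt$ bound of Proposition~\ref{unifnu-bis}.
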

We at first prove   the following
technical lemma, which studies time increments of the solution to
the  stochastic control problem  \eqref{equhnu} which extends   both \eqref{evolunu} and \eqref{control}.

To state this  lemma,   we need the following notations.
For every integer $n$, let $\psi_n : [0,T]\to [0,T]$ denote a measurable map
such that:\;  $s\leq \psi_n(s) \leq \big(s+c2^{-n})\wedge T$ for some positive constant $c$
and  for every $s\in [0,T]$.
 Given $N>0$,  $h_\nu\in {\mathcal A}_M$,
  for  $t\in [0,T]$ and  $\nu\in [0,\nu_0]$,   let
\[ G_N^\nu(t)=\Big\{ \omega \, :\, \Big (\sup_{0\leq s\leq t}  \|u_h^\nu(s)(\omega)\|^2 \Big)\vee
\Big(  \int_0^t |A u_h^\nu(s)(\omega)|^2 ds \Big) \leq N\Big\}.\]
\begin{lemma} \label{timeincrement}
Let $a,b,\mu$ satisfy the condition $a(1+\mu^2)+b\mu^2=0$.
Let $\nu_0, M,N$  be positive constants,  $\sigma$ and $\bar{\s}_\nu$ satisfy condition {\bf (C5)},
$\s_\nu$ and  $\tilde{\sigma}_\nu$  be defined by \eqref{defs} for $\nu\in [0,\nu_0]$.
 For every $\nu \in ]0,\nu_0]$,
let  $\xi\in L^4(\Om;H)\cap L^2(\Om;V)$,  $h_\nu \in {\mathcal A}_M$     and let
 $u_{h_\nu}^\nu(t)$
denote  solution  to  \eqref{equhnu}.
For $\nu=0$, let  $\xi\in V$,   $h\in {\mathcal A}_M$,   let $u_h^0(t)$
denote be solution   to \eqref{control}.
 Then there exists a positive  constant
$C$ (depending on
$K_i, \tilde{K}_i,  L_i, \tilde{L}_i,  T, M, N, \nu_0$)
 such that:
\begin{align}  \label{timenu}
I_n(h_\nu ,\nu):&=\EX\Big[ 1_{G_N^\nu(T)} \int_0^T\!\!  \|u_{h_\nu}^\nu(s)-
u_{h_\nu}^\nu(\psi_n(s))\|^2 \, ds\Big]
\leq C\, 2^{-\frac{n}{2}} \; \mbox{ \rm for } 0<\nu\leq \nu_0, \\
I_n(h,0):&= 1_{G_N^0(T)} \int_0^T  \|u_h^0(s)-u_h^0(\psi_n(s))\|^2 \, ds
\leq C\, 2^{-n } \; \mbox{ \rm a.s. for } \nu=0.
\label{time0}
\end{align}
\end{lemma}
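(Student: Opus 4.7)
The strategy is to write the increment $w_s := u^\nu_{h_\nu}(\psi_n(s)) - u^\nu_{h_\nu}(s)$ via the integral form of~\eqref{equhnu} as
\begin{equation*}
w_s = -\!\int_s^{\psi_n(s)}\![\nu A u(r) + B(u(r))]\, dr + \sqrt{\nu}\!\int_s^{\psi_n(s)}\!\s_\nu(r,u(r))\, dW(r) + \!\int_s^{\psi_n(s)}\!\tilde\s_\nu(r,u(r))\, h_\nu(r)\, dr,
\end{equation*}
and to estimate the four contributions using the a priori bounds provided on the event $G_N^\nu(T)$. The local time factor $\psi_n(s) - s \leq c\, 2^{-n}$ is systematically converted to a uniform $2^{-n}$ gain via Fubini's identity $\int_0^T \int_s^{\psi_n(s)} f(r)\, dr\, ds \leq c\, 2^{-n}\int_0^T f(r)\, dr$.

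For the inviscid case $\nu = 0$ only the nonlinear and control pieces survive. Two Cauchy--Schwarz inequalities in the $V$-norm, together with $\|B(u)\|^2 \leq C\|u\|^4 \leq CN^2$ from~\eqref{boundB1} and $\|\s(r,u)\,h(r)\|^2 \leq (\bar K_0 + \bar K_1\|u\|^2)\,|h(r)|_0^2$ from condition~\textbf{(C5)}, produce a pointwise estimate on $G_N^0(T)$; integrating in $s$ and applying Fubini together with $\int_0^T |h(r)|_0^2\, dr \leq M$ gives the pathwise bound $\int_0^T \|w_s\|^2\, ds \leq C\, 2^{-2n}$, which proves~\eqref{time0}.

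For $\nu > 0$ the viscous piece $\nu\!\int_s^{\psi_n(s)}\!Au\, dr$ cannot be bounded directly in the $V$-norm, since $Au$ takes values in $H$ only. The key device is the interpolation inequality $\|w\|^2 \leq |w|\,|Aw|$ for $w \in Dom(A)$, valid for a.e.\ $s$ on $G_N^\nu(T)$ by Proposition~\ref{unifnu-bis}; Cauchy--Schwarz in $s$ and in expectation then yields
\begin{equation*}
I_n(h_\nu, \nu) \leq \Big(\EX\Big[1_{G_N^\nu(T)}\!\int_0^T\! |w_s|^2\, ds\Big]\Big)^{1/2} \Big(\EX\Big[1_{G_N^\nu(T)}\!\int_0^T\! |Aw_s|^2\, ds\Big]\Big)^{1/2}.
\end{equation*}
The second factor is bounded by $C\sqrt{N}$ via $|Aw_s|^2 \leq 2|Au(\psi_n(s))|^2 + 2|Au(s)|^2$ and $\int_0^T |Au|^2\, dr \leq N$. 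The first factor is bounded by $C\, 2^{-n/2}$ using $H$-norm estimates of each of the four pieces --- which bypass the $A^{3/2}u$ obstruction --- namely Cauchy--Schwarz plus $\int|Au|^2 \leq N$ for the viscous piece, \eqref{boundB1} for the nonlinear one, the It\^o isometry applied to the process stopped at $\tau_N := \inf\{t:\, (\sup_{r\leq t}\|u(r)\|^2) \vee (\int_0^t |Au|^2\, dr) > N\} \wedge T$ (so that $1_{G_N^\nu(T)} = 1_{\{\tau_N = T\}}$) for the stochastic piece, and conditions~\textbf{(C2)}--\textbf{(C5)} for the control piece. Combining gives $I_n(h_\nu, \nu) \leq C\, 2^{-n/2}$.

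The principal obstacle is the viscous term in the $V$-norm, which is avoided via the interpolation $\|w\|^2 \leq |w|\,|Aw|$ at the cost of only the weaker $O(2^{-n/2})$ estimate (rather than the $O(2^{-n})$ available for $\nu = 0$), since the factor $\bigl(\EX\!\int|Aw_s|^2\, ds\bigr)^{1/2} = O(\sqrt{N})$ is merely uniform in $n$. A secondary delicate point is the compatibility of the indicator $1_{G_N^\nu(T)}$ with the It\^o isometry, resolved by the stopping-time construction above; control of $\int_0^T|Au(\psi_n(s))|^2\, ds$ on the event is handled via $|Au|^2 \in L^1([0,T])$ combined with the regularity of the forward shift $\psi_n$ that will be chosen in the applications of Section~\ref{s4}.
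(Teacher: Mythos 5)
Your decomposition of the increment via the integral form and the Fubini identity $\int_0^T\!\int_s^{\psi_n(s)}\!f(r)\,dr\,ds\le c\,2^{-n}\!\int_0^T\!f(r)\,dr$ are both standard and correct, and your argument for the inviscid case $\nu=0$ is sound (indeed it gives the stronger bound $O(2^{-2n})$). However, the route you take for $\nu>0$ departs from the paper's and contains a genuine gap. You reduce the $V$-norm increment to the product
$\big(\EX[1_{G_N^\nu(T)}\!\int_0^T\!|w_s|^2ds]\big)^{1/2}\big(\EX[1_{G_N^\nu(T)}\!\int_0^T\!|Aw_s|^2ds]\big)^{1/2}$
via the interpolation $\|w\|^2\le |w|\,|Aw|$, and you bound the second factor by $|Aw_s|^2\le 2|Au(\psi_n(s))|^2+2|Au(s)|^2$ together with $\int_0^T|Au(s)|^2ds\le N$. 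The problem is the term $\int_0^T|Au(\psi_n(s))|^2\,ds$: the event $G_N^\nu(T)$ only controls the Lebesgue integral $\int_0^T|Au(r)|^2dr$, not pointwise values of $|Au|$, and a pre-composition with $\psi_n$ can concentrate mass. In the actual application (Proposition~\ref{weakconv}) one takes $\psi_n=\bar s_n$, the dyadic step function, for which $\int_0^T|Au(\bar s_n(s))|^2\,ds=T\,2^{-n}\sum_{k=1}^{2^n}|Au(t_k)|^2$ --- a Riemann sum over the grid points $t_k$ that need not be bounded by $\int_0^T|Au|^2dr$ (indeed $u(t_k)\in Dom(A)$ is known only for a.e.\ $t$, and even when it holds there is no uniform control on $|Au(t_k)|$). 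The vague appeal to ``regularity of the forward shift $\psi_n$'' does not repair this, because $\bar s_n$ is a step function with no such regularity; and no interpolation of $|A^{3/2}u|$ through $\|u\|$ and $|Au|$ is available since $3/2>1$ is outside the interpolation window.

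The paper avoids this obstacle entirely by applying It\^o's formula directly to $\|u(\psi_n(s))-u(s)\|^2$, so that the viscous contribution appears not in isolation but paired against the increment:
$-2\nu\langle A^{3/2}u(r),A^{1/2}[u(r)-u(s)]\rangle=-2\nu|Au(r)|^2+2\nu(Au(r),Au(s))$.
Young's inequality then lets the negative $-2\nu|Au(r)|^2$ absorb the cross term, leaving only $\tfrac{\nu}{2}|Au(s)|^2$, which after Fubini is controlled by $\int_0^T|Au(s)|^2\,ds\le N$ --- no $|Au(\psi_n(s))|$ ever appears. The nonlinear contribution likewise benefits from the cancellation $\langle B(u(r)),Au(r)\rangle=0$, leaving only the cross term $\langle A^{1/2}B(u(r)),A^{1/2}u(s)\rangle$, controlled by $\|B(u(r))\|\|u(s)\|$. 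These two cancellations, which your additive decomposition of $w_s$ discards, are the crux of the viscous estimate, and their absence is why your second factor cannot be closed.
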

\begin{proof} For $\nu>0$, the proof is  close to that of Lemma 4.2 in \cite{DM}.
Let  $\nu \in ]0,\nu_0]$,  $h\in {\mathcal A}_M$,;
for any $s\in [0,T]$, It\^o's formula yields
\[
 \|u_{h_\nu}^\nu(\psi_n(s))-u_{h_\nu}^\nu(s)\|^2 =2\int_s^{\psi_n(s)} \!\!\! \big(A
\big[ u_{h_\nu}^\nu(r)-u_{h_\nu}^\nu(s)\big] , d u_{h_\nu}^\nu(r))
+\nu \int_s^{\psi_n(s)} \!\!\! |A^{\frac{1}{2}}\s (r,u_{h_\nu}^\nu (r))|^2_{L_Q}d r .
\]
 Therefore
$I_n(h_\nu,\nu)=  \sum_{1\leq i\leq 5} I_{n,i}(h_\nu,\nu)$, where
\begin{align*}
I_{n,1}(h_\nu,\nu)&=2\, \sqrt{\nu}\;  \EX\Big( 1_{G_N^\nu(T)} \int_0^T\!\! \!\! ds \int_s^{\psi_n(s)}\!  \big(
A^{\frac{1}{2}} \sigma_\nu(r, u_{h_\nu}^\nu(r)) dW(r) \, , \, A^{\frac{1}{2}}
\big[u_{h_\nu}^\nu(r)-u_{h_\nu}^\nu(s)\big]  \big)\Big) , \\
I_{n,2}(h_\nu,\nu)&={\nu}\;  \EX \Big( 1_{G_N^\nu(T)} \int_0^T  \!\!ds \int_s^{\psi_n(s)} \!\!
|A^{\frac{1}{2}} \sigma_\nu(r,u_{h_\nu}^\nu(r))|_{L_Q}^2 \, dr\Big) , \\
I_{n,3}(h_\nu,\nu)&=-2 \, \EX \Big( 1_{G_N^\nu(T)} \int_0^T  \!\! ds \int_s^{\psi_n(s)} \!\!
  \big\langle A^{\frac{1}{2}}  B( u_{h_\nu}^\nu(r))\, , \,  A^{\frac{1}{2}}\big[
 u_{h_\nu}^\nu(r)-u_{h_\nu}^\nu(s)\big] \big\rangle \, dr\Big) ,  \\
I_{n,4}(h_\nu,\nu)&=- 2 \,\nu\,   \EX \Big( 1_{G_N^\nu(T)} \int_0^T  \!\! ds \int_s^{\psi_n(s)} \!\!
 \big\langle A^{\frac{3}{2}}  \, u_{h_\nu}^\nu(r)\, , \, A^{\frac{1}{2}}\big[
u_{h_\nu}^\nu(r)-u_{h_\nu}^\nu(s)\big] \big\rangle  \, dr\Big) ,  \\
I_{n,5}(h_\nu,\nu)&=2 \,  \EX \Big( 1_{G_N^\nu(T)} \int_0^T \!\!\! ds \int_s^{\psi_n(s)} \!\!\! \big(
A^{\frac{1}{2}} \tilde{\sigma}_\nu(r,u_{h_\nu}^\nu(r)) \, h_{\nu}(r)\,  , \, A^{\frac{1}{2}}\big[
u_{h_\nu}^\nu(r)-u_{h_\nu}^\nu(s) \big] \big)\, dr\Big)  .
\end{align*}
Clearly  $G_N^\nu(T)\subset G_N^\nu(r)$ for $r\in [0,T]$.
Furthermore,   $\|u_h^\nu(r)\|^2 \vee \|u_h^\nu(s)\|^2 \le N$
 on $G_N^{\nu}(r)$ for $0\leq s\leq r\leq T$.
\par
The Burkholder-Davis-Gundy inequality and ({\bf C5}) yield for  $0< \nu \leq \nu_0$
\begin{align*}
|I_{n,1}(h_\nu,\nu)|&\leq
 6\sqrt{\nu} \int_0^T ds \; \EX \Big(
 \int_s^{\psi_n(s)} \big|A^{\frac{1}{2}}\s_\nu(r,u_{h_\nu}^\nu(r))\big|_{L_Q}^2
 1_{G_N^\nu(r)}\, \| u_{h_\nu}^\nu(r)- u_{h_\nu}^\nu(s)\|^2 \;  dr \Big)^{\frac{1}{2}} \\
&\leq  6 \sqrt{2 \nu_0 N }  \int_0^T ds \; \EX \Big(
 \int_s^{\psi_n(s)}
\big[ K_0+K_1\,  \|u_{h_\nu}^\nu(r)\|^2  + K_2\,  |A u_{h_\nu}^\nu(r)|^2 \big]\;   \, dr \Big)^{\frac{1}{2}}.
\end{align*}
 The Cauchy-Schwarz inequality  and Fubini theorem as well as \eqref{bound-V}, which holds
uniformly in  $\nu \in ]0,\nu_0]$ for small enough  fixed $\nu_0>0$, imply
\begin{align} \label{In1}
|I_{n,1}(h_\nu,\nu)|&\leq
6 \sqrt{2 \nu_0  N T } \Big[ \EX \int_0^T\!\!
\big[ K_0+K_1\, \|u_{h_\nu}^\nu(r)\|^2 +
K_2 |Au_{h_\nu}^\nu(r)|^2\big] \Big( \int_{(r-c2^{-n})\vee 0}^r ds\Big)  dr
 \Big]^{\frac{1}{2}} \nonumber \\
&\leq  C_1\, \sqrt{N}\,  2^{-\frac{n}{2}}
\end{align}
for some constant $C_1$ depending only on $K_i$, $i=0,1,2$, $L_j$, $ j=1,2$,
 $M$, $\nu_0$ and $T$.
The property ({\bf C5}) and Fubini's theorem  imply that for $0 < \nu\leq \nu_0$,
\begin{align} \label{In2}
|I_{n,2}(h_\nu,\nu)|&\leq \nu \,   \EX \Big( 1_{G_N^\nu(T)}   \int_0^T \!\! ds \int_s^{\psi_n(s)} \!\!
\big[ K_0+K_1\|u_{h_\nu}^\nu(r)\|^2 + K_2 |Au_{h_\nu}^\nu(r)|^2 \big]  dr\Big) \nonumber \\
&\leq
\nu_0  \EX \int_0^T   \big[ K_0 +K_1\, \|u^\nu_{h_\nu}(r)\|^2 + K_2 |A u_{h_\nu}^\nu(r)|^2 \big]
\, c 2^{-n}\, dr \;
\leq\;  C_1 2^{-n}
\end{align}
for some constant $C_1$ as above.
Since $\big\langle B(u), Au\big\rangle=0$ and $\| B(u)\|\leq C \|u\|^2$  for $u\in V$ by \eqref{boundB1},
 we deduce that
\begin{align} \label{In3}
|I_{n,3}&(h_\nu,\nu)| \leq 2 \EX \Big( 1_{G_N^\nu(T)}\!\! \int_0^T  \!\!\! ds
\int_s^{\psi_n(s)} \!\!\! dr \big( A^{\frac{1}{2}}  B(u^\nu_{h_\nu}(r)) \, ,\,
 A^{\frac{1}{2}} u^\nu_{h_\nu}(s)\big) \Big)
\nonumber \\
&\leq  2C \EX \Big( 1_{G_N^\nu(T)}\!\! \int_0^T  \!\!\! ds
\int_s^{\psi_n(s)} \!\!\ \|u^\nu_{h_\nu}(r)\|^2 \,
\| u^\nu_h(s)\| \, dr \Big) %\nonumber \\ &
\leq 2C \, N^{\frac{3}{2}}\, T^2 2^{-n} . % :=C(T,N)\, 2^{-n}.
\end{align}
Using  Cauchy-Schwarz's inequality   and \eqref{bound-V} we deduce that
\begin{eqnarray} \label{In4}
I_{n,4}(h_\nu,\nu) &\leq &  2 \, \nu\,  \EX \Big( 1_{G_N^\nu(T)} \int_0^T  \!\! \! ds \int_s^{\psi_n(s)} \!\!\!
dr  \big[ -  |Au_{h_\nu}^\nu(r)|^2  +  |A u_{h_\nu}^\nu(r)|\,  |A u_{h_\nu}^\nu(s)|\big]\Big) \nonumber \\
&\leq &
 \frac{\nu}{2}\;   \EX \Big(  \int_0^T ds \; |A u^\nu_{h_\nu}(s)|^2 \,
\int_s^{\psi_n(s)} dr \Big) \leq   C_1     2^{-n}
\end{eqnarray}
for some constant $C_1$ as above.\\
 Finally,  Cauchy-Schwarz's inequality, Fubini's theorem,
 ({\bf C5})  and the definition
 of ${\mathcal A}_M$ yield
\begin{align}  \label{In5}
& |I_{n,5}(h_\nu,\nu)|\leq 2   \;  \EX \Big( 1_{G_N^\nu(T)} \int_0^T \!\! ds  \int_s^{\psi_n(s)} dr \nonumber \\
 &\quad  \qquad \big[ \tilde{K}_0 +\tilde{K}_1\|u_{h_\nu}^\nu(r)\|^2 +\nu  \tilde{K}_2 |A u_{h_\nu}^\nu(r)|^2
\big]^{\frac{1}{2}}\, |h_\nu(r)|_0 \, \|u_{h_\nu}^\nu(r)-u_{h_\nu}^\nu(s)\|\, \Big)
\nonumber \\
& \quad \leq  4 \sqrt{N}  \; \EX  \Big(  1_{G_N^\nu(T)}\,   \big( \tilde{K}_0 + \tilde{K}_1 N\big)^{\frac{1}{2}}
\int_{0}^T |h_\nu(r)|_0 \,
  \Big( \int_{(r-c2^{-n})\vee 0}^r ds \Big)\, dr \Big) \nonumber \\
& \quad \qquad + 4\sqrt{N}   \EX  \Big(  1_{G_N^\nu(T)}\,
  \sqrt{\nu_0 \tilde{K}_2} \int_0^T |Au^\nu_{h_\nu}(r)|\, |h_\nu(r)|_0
   \Big( \int_{(r-c2^{-n})\vee 0}^r ds \Big)\, dr  \Big) \nonumber \\
& \quad \leq 4\sqrt{N} \Big[  \sqrt{M T }  \big( \tilde{K}_0 + \tilde{K}_1 N\big)^{\frac{1}{2}} +
\big(\nu_0\,  \tilde{K}_2 \, NM\big)^{\frac{1}{2}}\Big]  \, c\, T 2^{-n}
\leq C(\nu_0,N,M,T)\,  2^{-n}.
\end{align}
Collecting the upper estimates from \eqref{In1}-\eqref{In5},
we conclude the proof of \eqref{timenu} for $0<\nu\leq \nu_0$.

\par
Let $h\in {\mathcal A}_M$; a similar argument for $\nu=0$ yields for almost every $\omega$
\[ 1_{G_N^0(T)} \int_0^T \int_0^T \|u^0_h(\psi_n(s)) -u^0_h(s)\|^2 \, ds \leq \sum_{j=1,2} I_{n,j}(h,0),\]
with
\begin{eqnarray*}
I_{n,1}(h,0)&=&-2 \,   1_{G_N^0(T)} \int_0^T  \!\! ds \int_s^{\psi_n(s)} \!\!
  \big\langle A^{\frac{1}{2}}  B( u_h^0(r))\, , \,  A^{\frac{1}{2}}\big[
 u_h^0(r)-u_h^0(s)\big] \big\rangle \, dr ,  \\
I_{n,2}(h,0)&=&2 \,    1_{G_N^0(T)} \int_0^T \!\! ds \int_s^{\psi_n(s)} \!\! \big(
A^{\frac{1}{2}} \tilde{\sigma}_0(r,u_h^\nu(r)) \, h(r)\,  , \, A^{\frac{1}{2}}\big[
u_h^\nu(r)-u_h^\nu(s) \big] \big)\, dr  .
\end{eqnarray*}
An argument similar to that which gives \eqref{In3}  proves
\begin{equation}\label{In01}
|I_{n,1}(h,0)|\leq C(T,N) \, 2^{-n}.
\end{equation}
 Cauchy-Schwarz's inequality  and {\bf (C5)} imply
\begin{align}\label{In02}
|I_{n,2}&(h,0)|\leq 2   \;   1_{G_N^0(T)} \int_0^T \!\! ds  \int_s^{\psi_n(s)} dr %\nonumber \\
% & \qquad
\big(\tilde{K}_0 +\tilde{K}_1\|u_h^0(r)\|^2
\big)^{\frac{1}{2}}\, |h(r)|_0 \, \|u_h^0(r)-u_h^0(s)\|
\nonumber \\
& \leq  4 \sqrt{N}  \;      \big( \tilde{K}_0 + \tilde{K}_1 N\big)^{\frac{1}{2}}
\int_{0}^T |h(r)|_0 \,
  \Big( \int_{(r-c2^{-n})\vee 0}^r ds \Big)\, dr
\leq C(N,M,T)\,  2^{-n}.
\end{align}
The inequalities \eqref{In01} and \eqref{In02} conclude the proof of \eqref{time0}.
\end{proof}

\par
Now we return to the setting of Theorem~\ref{PGDunu}.
Let $\nu_0\in ]0,\nu_1]$ be defined by Theorem \ref{unifnu} and Proposition \ref{unifnu-bis},
  $(h_\nu , 0 < \nu \leq \nu_0)$  be a family of random elements
taking values in the set ${\mathcal A}_M$ defined by \eqref{AM}.
Let  $u^\nu_{h_\nu}$   be
the solution of the corresponding stochastic control equation \eqref{equhnu}
 with initial condition $u^\nu_{h_\nu}(0)=\xi \in V$.
Note that $u^\nu_{h_\nu}={\mathcal G}^\nu_\xi\Big(\sqrt{\nu} \big( W_. + \frac{1}{\sqrt \nu}
  \int_0^. h_\nu(s)ds\big) \Big)$
due to the  uniqueness of the solution.
 The following proposition establishes the weak convergence of the family
$(u^\nu_{h_\nu})$ as $\nu\to 0$.
Its proof is similar to that of Proposition 4.5 in \cite{CM};
 see also Proposition 3.3 in \cite{DM}.
\begin{prop}  \label{weakconv}
Let $a,b,\mu$ be such that $a(1+\mu^2)+b\mu^2=0$.
Let $\alpha\in [0,\frac{1}{4}]$, $\s$ and $\bar{\s}_\nu$ satisfy  the conditions ({\bf C5}) and {\bf (C6)}
for this value of  $\alpha$, $\s_\nu$ and $\tilde{\s}_\nu$ be defined by \eqref{defs}.
Let $\xi $ be ${\mathcal F}_0$-measurable such that $\EX \big(|\xi|_H^4+\|\xi\|^2\big)< \infty$,
 and let $h_\nu$ converge to $h$ in distribution as random elements
taking values in ${\mathcal A}_M$, where this set is defined by \eqref{AM}
and endowed with the weak topology of the space $L_2(0,T;H_0)$.
Then  as $\nu \to 0$, the solution $u^\nu_{h_\nu}$ of \eqref{equhnu} converges in
distribution  in ${\mathcal X}$ (defined by \eqref{defX}) to the solution $u_h^0$ of \eqref{control}.
%:={\mathcal C}([0,T],V)$ endowed with the
%norm $\|\,\cdot\|_X$, that is that of uniform convergence  on $[0,T]$
%where $V$ is endowed with the  weaker  norm $\|\,\cdot\,\|_\alpha$.
%${\mathcal C}([0, T], {\mathcal H}_\alpha)$.
 That is, as $\nu\to 0$, the process
  ${\mathcal G}^\nu_\xi \Big(\sqrt{\nu} \big( W_. + \frac{1}{\sqrt{\nu}} \int_0^. h_\nu(s)ds\big) \Big)$
  converges in
distribution to $ {\mathcal G}^0_\xi \big(\int_0^. h(s)ds\big)$  in ${\mathcal C}([0,T],V)$ for the topology
of uniform convergence on $[0,T]$ where $V$ is endowed with  the norm $\|\,\cdot\,\|_\alpha$.
\end{prop}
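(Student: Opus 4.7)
The plan is to follow the Budhiraja-Dupuis weak convergence approach as implemented in \cite{CM, DM}. By the Skorokhod representation theorem on the Polish space $\mathcal{A}_M$ endowed with the weak topology of $L^2([0,T], H_0)$, and using pathwise uniqueness for \eqref{equhnu} and \eqref{control}, transfer to a probability space on which $h_\nu \to h$ almost surely in this topology (with the Wiener process preserved in law). The uniform estimates \eqref{bound1}, \eqref{bound-V} and \eqref{boundu0} combined with Chebyshev's inequality imply that the good events $G_N^\nu(T)$ of Lemma \ref{timeincrement} and the corresponding almost-sure event for $u^0_h$ have probability tending to $1$ uniformly in $\nu \in (0, \nu_0]$ as $N \to \infty$. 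Letting $G_N$ denote their intersection and $U^\nu := u^\nu_{h_\nu} - u^0_h$, it suffices to prove $\EX\bigl[ 1_{G_N} \sup_{t \in [0,T]} \|U^\nu(t)\|_\alpha^2 \bigr] \to 0$ for each fixed $N$.

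Apply It\^o's formula to $\|U^\nu(t)\|_\alpha^2 = |A^\alpha U^\nu(t)|^2$. The bilinear contribution is controlled via \eqref{AalphaB}, producing a Gronwall integrand bounded on $G_N$ by $2CN \|U^\nu\|_\alpha^2$. The viscous term $-2\nu(A^{1/2+\alpha} u^\nu_{h_\nu}, A^{1/2+\alpha} U^\nu)$ is small by Cauchy-Schwarz and \eqref{compcalH} (valid since $\alpha \leq 1/4$) combined with $\nu\int_0^T |Au^\nu_{h_\nu}|^2\, ds \leq N$; the quadratic variation $\nu \int_0^t |A^\alpha \sigma_\nu|_{L_Q}^2\, ds$ is $O(\nu)$ by (C5); and the stochastic integral is absorbed by Burkholder-Davis-Gundy and the $\sqrt\nu$ prefactor. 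Decompose the drift difference as
\[
\tilde\sigma_\nu(s,u^\nu_{h_\nu})h_\nu - \sigma(s,u^0_h)h = \sqrt\nu\, \bar\sigma_\nu(s,u^\nu_{h_\nu})h_\nu + [\sigma(s,u^\nu_{h_\nu}) - \sigma(s,u^0_h)]h_\nu + \sigma(s,u^0_h)(h_\nu - h).
\]
The $\sqrt\nu$-piece vanishes analogously, while by \eqref{Lipalpha} the Lipschitz piece yields a Gronwall integrand $L_3 \|U^\nu\|_\alpha^2 |h_\nu|_0$ whose time-factor has $L^1$-norm at most $\sqrt{MT}$.

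The main obstacle is to show that the remaining weak-convergence term
\[
R^\nu(t) := 2\int_0^t (A^\alpha \sigma(s, u^0_h(s))(h_\nu(s) - h(s)),\; A^\alpha U^\nu(s))\, ds
\]
tends to $0$ uniformly in $t$ on $G_N$, so that Gronwall closes the estimate. Substitute $A^\alpha U^\nu(\psi_n(s))$ for $A^\alpha U^\nu(s)$ using the discretization of Lemma \ref{timeincrement}; the replacement error is controlled via Cauchy-Schwarz and \eqref{timenu}--\eqref{time0} (together with $\|\cdot\|_\alpha \leq C\|\cdot\|$ from \eqref{compcalH}) by $C(N, M)\, 2^{-n/4}$ in $L^2(\Omega)$. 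What remains is a finite sum over the partition induced by $\psi_n$ of pairings $(A^\alpha U^\nu(t^n_{k+1}),\, g^\nu_k(t))$, where $g^\nu_k(t) := \int_{t^n_k}^{t^n_{k+1} \wedge t} A^\alpha \sigma(s, u^0_h(s))(h_\nu - h)(s)\, ds$ converges weakly to $0$ in $H$ for each fixed $(n,k)$ by the weak $L^2$-convergence $h_\nu \to h$ tested against the fixed element $A^\alpha \sigma(\cdot, u^0_h(\cdot))^* \phi$ for every $\phi \in H$. Turning this weak convergence into vanishing of the pairings relies on compactness: the inclusion $V \hookrightarrow \mathcal{H}_\alpha$ is compact for $\alpha < 1/2$, and together with the equicontinuity of $\{U^\nu\}$ in $\mathcal{C}([0,T], \mathcal{H}_\alpha)$ supplied by Lemma \ref{timeincrement}, Ascoli's theorem extracts a subsequence along which $U^\nu \to U^*$ strongly in $\mathcal{C}([0,T], \mathcal{H}_\alpha)$; the strong-times-weak pairing then forces $R^\nu \to 0$ along this subsequence. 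Passing to the limit in \eqref{equhnu} identifies $U^*$ as a solution of \eqref{control} driven by $h$, so $U^* \equiv 0$ by Theorem \ref{exisuniq0}, and since this is the unique possible subsequential limit the convergence extends to the full family.
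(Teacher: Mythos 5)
Your proposal follows the paper's outline closely through the Skorokhod reduction, the It\^o expansion of $\|U^\nu(t)\|_\alpha^2$, the drift decomposition, the Gronwall setup on the good sets, and the identification of $R^\nu$ as the only genuinely difficult term. The divergence — and the gap — is in how you dispose of the residual pairing after the time-discretization by $\psi_n$.

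The paper freezes the time argument of $\sigma$ in the key term: it further splits $R^\nu$ (the paper's $T_5$) into pieces $\tilde T_2, \tilde T_3$ that replace $\sigma(s,u^0_h(s))$ by $\sigma(\bar s_n, u^0_h(\bar s_n))$, leaving a residual $\bar T_5 = \sum_k \big(\sigma(t_k,u^0_h(t_k))\int_{t_{k-1}}^{t_k}(h_\nu-h)\,ds\,,\,A^{2\alpha}U_\nu(t_k)\big)$. The point of that decomposition is that each $\sigma(t_k,u^0_h(t_k))$ is, by condition \textbf{(C5)}, an element of $L_Q$, i.e.\ a Hilbert--Schmidt and hence \emph{compact} operator from $H_0$ to $H$. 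Applied to the weakly convergent sequence $\int_{t_{k-1}}^{t_k}(h_\nu-h)\,ds$ in $H_0$, it produces \emph{strong} convergence to $0$ in $H$; pairing with the uniformly bounded $A^{2\alpha}U_\nu(t_k)$ and invoking dominated convergence then gives $\EX(\bar T_5)\to 0$. No compactness of the solution family is invoked at all.

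You instead keep $\sigma$ time-dependent inside $g^\nu_k(t)=\int A^\alpha\sigma(s,u^0_h(s))(h_\nu-h)(s)\,ds$ and only get \emph{weak} convergence of $g^\nu_k(t)$ to $0$ in $H$. To close the pairing you then need strong convergence of $A^\alpha U^\nu$, and propose to extract it from Ascoli--Arzel\`a via the compact embedding $V\hookrightarrow\mathcal H_\alpha$ and ``equicontinuity supplied by Lemma~\ref{timeincrement}''. This is where the argument breaks down. Lemma~\ref{timeincrement} only gives $L^2$-in-time (and, for $\nu>0$, $L^1$-in-$\omega$) control of the increments $\int_0^T\|u(s)-u(\psi_n(s))\|^2\,ds$; this is not a uniform-in-time modulus of continuity and does not deliver Ascoli equicontinuity. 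Moreover the paths of $u^\nu_{h_\nu}$ for $\nu>0$ carry a stochastic integral, so a pathwise Ascoli extraction (with an $\omega$-dependent subsequence) cannot produce a subsequence of the \emph{family} $(U^\nu)$ converging in $\mathcal C([0,T],\mathcal H_\alpha)$; the correct machinery in that direction would be tightness and Prohorov, which is considerably heavier than what the paper does. Finally, the structure you describe is internally redundant: if the compactness--identification step (extract a limit, pass to the limit in the equation, invoke uniqueness from Theorem~\ref{exisuniq0}) actually worked, it would already prove the Proposition, making the Gronwall loop and the effort to show $R^\nu\to0$ unnecessary; conversely, using that identification to justify $R^\nu\to0$ inside the Gronwall argument is circular. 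The clean way out is the one the paper takes: shift the compactness from the solutions to the (known-to-be-compact) diffusion operator at a frozen time, which requires the extra commutator terms $\tilde T_2,\tilde T_3$ but eliminates the need for any Ascoli-type extraction.
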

\begin{proof}
Since ${\mathcal A}_M$ is a Polish space (complete separable metric space),
by the Skorokhod representation theorem, we can construct
processes $(\tilde{h}_\nu, \tilde{h}, \tilde{W})$ such that the
joint distribution of $(\tilde{h}_\nu,  \tilde{W})$ is the same as
that of $(h_\nu, W)$,  the distribution of $\tilde{h}$ coincides
with that of $h$, and $ \tilde{h}_\nu  \to \tilde{h}$,  a.s., in
the (weak) topology of $S_M$.
 Hence a.s. for every $t\in [0,T]$, $\int_0^t \tilde{h}_\nu(s) ds - \int_0^t
\tilde{h}(s)ds \to 0$ weakly in $H_0$.
To ease  notations, we will write
$(\tilde{h}_\nu, \tilde{h}, \tilde{W})=(h_\nu,h,W)$.
Let $U_\nu=u^\nu_{h_\nu}-u_h^0\in {\mathcal C}([0,T],V)$; then  $U_\nu(0)=0$ and
\begin{align}\label{difference2}
d U_\nu(t) = - \big[\nu Au^\nu_{h_\nu}(t) & +B(u^\nu_{h_\nu}(t))-B(u_h^0(t))\big]\, dt
 + \big[\s(t,u^\nu_{h_\nu}(t)) h_\nu(t) -\s(t,u_h^0(t)) h(t)\big]\,  dt
\nonumber\\
& +\sqrt{\nu} \;
\s_\nu(t,u^\nu_{h_\nu}(t))\,  dW(t) + \sqrt{\nu}\,  \bar{\s}_\nu(t, u^\nu_{h_\nu}(t))\, h_\nu(t)\, dt.
\end{align}
 On any finite time interval $[0, t]$ with $t\leq T$, It\^o's formula,
 yields for $\nu >0 $ and $\alpha\in [0,\frac{1}{2}]$:
\begin{align*}
&  \|U_\nu(t)\|_\alpha^2
 =  -2\nu \int_0^t \!\!\! \big( A^{1+\alpha} u^\nu_{h_\nu}(s) , A^\alpha U_\nu(s)\big) ds
-2\int_0^t \!\! \! \big\langle A^\alpha \big[ B(u^\nu_{h_\nu}(s))- B(u^0_{h}(s))\big] ,
A^\alpha U_\nu(s)\big\rangle ds\\
& \quad +  2\sqrt{\nu}\int_0^t \big(A^\alpha  \s_\nu(s,u^\nu_{h_\nu}(s)) dW(s)\, ,\, A^\alpha U_\nu(s) \big)
+ \nu \int_0^t|A^\alpha \s_\nu(s,u^\nu_{h_\nu}(s))|^2_{L_Q}\, ds  \\
&\quad + 2\sqrt{\nu} \int_0^t \big( A^\alpha \bar{\s}_\nu(s,u^\nu_{h_\nu}(s))\, h_\nu(s)\,,\,
A^\alpha U_\nu(s)\big)\, ds
\\
& \quad +
 2\int_0^t \big(A^\alpha \big[\s(s,u^\nu_{h_\nu}(s))h_{\nu}(s)-\s(s,u^0_h(s))\, h(s)\big]\, ,\,
A^\alpha  U_\nu(s) \big)\, ds.
\end{align*}
Furthermore, $\big( A^\alpha \bar{\s}_\nu(s,u^\nu_{h_\nu}(s)) h_\nu(s)\, ,\, A^\alpha U_\nu(s)\big)
= \big(  \bar{\s}_\nu(s,u^\nu_{h_\nu}(s)) h_\nu(s)\, ,\, A^{2 \alpha} U_\nu(s)\big)$.
The  Cauchy-Schwarz inequality, conditions {\bf (C5)} and {\bf (C6)},   \eqref{AalphaB}
and \eqref{compcalH}  yield since
 $\alpha \in [0,\frac{1}{4}]$
% and the inequality
%$\|u\|_\alpha \leq k_0^{2\alpha-1} \|u\|$ for $u\in V$, which holds
%for $\alpha \leq \frac{1}{4}$ by \eqref{compcalH}, yield
\begin{align}\label{total-error}
&  \|U_\nu(t)\|_\alpha^2  \leq  2\nu \int_0^t \!\! \big| A^{\frac{1}{2}+2\alpha} u^\nu_{h_\nu}(s)\big| \,
\big( \|u^\nu_{h_\nu}(s)\| + \|u^0_h(s)\|\big)\,  ds \nonumber  \\
& \quad + 2 C \int_0^t \!\! \|U_\nu(s)\|_\alpha^2 \,
\big( \|u^\nu_{h_\nu}(s)\| + \|u^0_h(s)\|\big)\,  ds
+  2\sqrt{\nu}\int_0^t \!\! \big( \s_\nu(s,u^\nu_{h_\nu}(s)) dW(s)\, ,\, A^{2\alpha} U_\nu(s) \big)
\nonumber \\
&\quad  + \nu \int_0^t\big[K_0 + K_1 \|u^\nu_{h_\nu}(s)\|^2 +  K_2 |Au^\nu_{h_\nu}(s)|^2 \big] \, ds
\nonumber \\
&\quad
+ 2\sqrt{\nu} \int_0^t \Big[  \sqrt{\tilde{K}_0} +
  k_0^{-\frac{1}{2}} \sqrt{\tilde{K}_{\mathcal H}} \|u^\nu_{h_\nu}(s)\|\Big]\,
|h_\nu(s)|_0\, k_0^{4\alpha -1}
\big( \|u^\nu_{h_\nu}(s)\|+ \|u^0_h(s)\|\big)\, ds \nonumber \\
%+4\sqrt{\nu\bar{K}_2} \int_0^t \!\! \|u^\nu_{h_\nu}(s)\|\, \big[ \|u^\nu_{h_\nu}(s)\|^2 +
%\|u^0_h(s)\|^2\big]  ds  \nonumber\\
&\quad +  2\int_0^t \big(A^\alpha \big[\s(s,u^\nu_{h_\nu}(s))-\s(s,u^0_h(s))\big] h_\nu(s) \, ,\,
A^\alpha  U_\nu(s) \big)\, ds \nonumber \\
& \quad +  2\int_0^t \big(A^\alpha  \s(s,u^0_h(s)) \, \big[ h_\nu(s)-h_0(s)\big]\, ,\, A^\alpha U_\nu(s)\big)\, ds
\nonumber \\ &
\leq 2  \int_0^t \!\! \|U_\nu(s)\|^2_\alpha \,  \big[C \|u^\nu_{h_\nu}(s)\|^2 +C \|u^0_h(s)\|^2 +
 L_3 |h_\nu(s)|_0\big] \, ds + \sum_{1\leq  j\leq 5} T_j(t,\nu),
\end{align}
where using again the fact that  $\alpha\leq \frac{1}{4}$, we have
\begin{align*}
T_1(t,\nu)&=2\nu \, \sup_{s\leq t} \big[ \|u^\nu_{h_\nu}(s)\| + \|u^0_h(s)\|\big]\, \int_0^t
|A u^\nu_{h_\nu}(s)|\, ds , \\
T_2(t,\nu)&= 2\sqrt{\nu}\int_0^t \big(  \s_\nu(s,u^\nu_{h_\nu}(s))  dW(s)\, ,\, A^{2\alpha} U_\nu(s)  \big), \\
T_3(t,\nu)&=  \nu   \int_0^t  \big[K_0 + K_1 \|u^\nu_{h_\nu}(s)\|^2 +  K_2 |Au^\nu_{h_\nu}(s)|^2 \big] \, ds,\\
T_4(t,\nu)&=  2\sqrt{\nu} \, k_0^{2\alpha -1}  \int_0^t \!\!  \Big[  \sqrt{\tilde{K}_0} +
  k_0^{-\frac{1}{2}} \, \sqrt{\tilde{K}_{\mathcal H}}\, \|u^\nu_{h_\nu}(s)\|\Big]  \, |h_\nu(s)|_0
\big( \|u^\nu_{h_\nu}(s)\|+ \|u^0_h(s)\|\big) ds,\\
T_5(t,\nu)&= 2\int_0^t \Big( \s(s,u^0_h(s))\, \big( h_{\nu}(s)-h(s)\big), \, A^{2\alpha} U_\nu(s)\Big)\, ds.
\end{align*}
We want  to show that as $\nu \to 0$,  $\sup_{t\in [0,T]} \|U_\nu(s)\|_\alpha \to 0$ in
probability, which implies
 that $u^\nu_{h_\nu} \to u^0_h$  in distribution in $X$. % ${\mathcal C}([0,T],{\mathcal H}_\alpha)$.
 Fix $N>0$ and for $t\in [0,T]$ let
\begin{eqnarray*}  G_N(t)&=&\Big\{ \sup_{0\leq s\leq t} \|u^0_h(s)\|^2 \leq N\Big\},  \\
G_{N,\nu}(t)&=&  G_N(t)\cap \Big\{ \sup_{0\leq s\leq t} \|u^\nu_{h_\nu}(s)\|^2 \leq N\Big\} \cap
 \Big\{ \nu \int_0^t |A u_{h_\nu}(s)|^2  ds \leq N
\Big\}.
\end{eqnarray*}
The proof consists in two steps.\\
\textbf{Step 1:}
For  $\nu_0 >0$ given by Proposition \ref{unifnu-bis} and Theorem \ref{exisuniq0}, we have
%as $ N\to \infty$,
\[ \sup_{0<\nu\leq \nu_0}\; \sup_{h,h_\nu \in {\mathcal A}_M}
\PX(G_{N,\nu}(T)^c )\to 0 \quad\mbox{\rm as }\;  N\to+\infty.\]
Indeed, for $\nu \in ]0,\nu_0]$, $h,h_\nu \in {\mathcal A}_M$, the Markov inequality and
the a priori estimates \eqref{boundu0} and   \eqref{bound-V}, which holds uniformly in $\nu\in ]0,\nu_0]$, imply
that for $0<\nu\leq \nu_0$,
\begin{align}\label{G-c}
  \PX ( G_{N,\nu}(T)^c )
& \leq  \frac{1}{N}
  \sup_{h, h_\nu \in {\mathcal A}_M}
 \EX
\Big( \sup_{0\leq s\leq T} \|u^0_h(s)\|^2 + \sup_{0\leq s\leq T}
\|u^\nu_{h_\nu}(s)|^2
+ \nu \int_0^T|A u^\nu_{h_\nu}(s)|^2 \, ds \Big) \nonumber \\
&\leq  {C \, \big(1+ \EX|\xi|^4 + \EX\|\xi\|^2 \big)}{N}^{-1},
\end{align}
for some constant $C$ depending on $T$ and $M$, but independent of $N$ and $\nu$.

\noindent \textbf{Step 2:} Fix $N>0$, let
 $h, h_\nu \in {\mathcal A}_M$  be  such that  $h_\nu\to h$ a.s. in the weak topology
 of $L^2(0,T;H_0)$   as $\nu \to 0$. Then  one has: %  as $\nu \to 0$:
\begin{equation} \label{cv1}
\lim_{\nu\to 0} \EX\Big[ 1_{G_{N,\nu}(T)}  \sup_{0\leq t\leq T } \|U_\nu(t)|_\alpha^2 \Big] =  0.
\end{equation}
Indeed,  \eqref{total-error}  and Gronwall's lemma imply that on $G_{N,\nu}(T)$, one has for $0<\nu\leq \nu_0$:
\begin{equation} \label{*}
 \sup_{0\leq t\leq T} \|U_\nu(t)\|_\alpha^2 \leq
  \exp\Big(4NC  +2L_3 \sqrt{MT}\Big)
\sum_{1\leq j\leq 5} \;  \sup_{0\leq t\leq T}  T_j(t,\nu) \, .
\end{equation}
 Cauchy-Schwarz's  inequality  implies that for some constant $C(N,T)$ independent on $\nu$:
\begin{align}\label{T1n}
\EX \Big( 1_{G_{N,\nu}(T)} \sup_{0\leq t\leq T}|T_1(t,\nu)|\Big) &\leq 4 \sqrt{TN}\, \sqrt{\nu}\,
 \EX\Big(  1_{G_{N,\nu}(T)}
\Big\{ \int_0^T |Au^\nu_{h_\nu}(s)|^2\, ds \Big\}^{\frac{1}{2}} \Big) \nonumber \\
& \leq C(N,T) \,  \sqrt{\nu}.
\end{align}
Since the sets $G_{N,\nu}(.)$ decrease,
the Burkholder-Davis-Gundy inequality, $\alpha\leq \frac{1}{4}$, the inequality  \eqref{compcalH}
  and  ({\bf C5}) imply that for some constant $C(N,T)$ independent of $\nu$:
\begin{align}
&\EX \Big(\! 1_{G_{N,\nu}(T)}  \sup_{0\leq t\leq T} |T_2(t,\nu)|\Big)   \leq  6\sqrt{\nu} \; \EX
\Big\{\!\! \int_0^T 1_{G_{N,\nu}(s)} \, k_0^{4(2\alpha - \frac{1}{2})}\,  \|U_\nu(s)\|^2 \;
 |\s_\nu(s, u^\nu_{h_\nu}(s))|^2_{L_{Q}}
 ds\Big\}^\frac12 \nonumber \\
&\leq 
 6   \sqrt{\nu} k_0^{2(2\alpha - \frac{1}{2})} \EX  \Big\{ \!\!\int_0^T\!\! \! 1_{G_{N,\nu}(s)} 4N\,
 (K_0+  K_1\| u^\nu_{h_\nu}(s) \|^2 + K_2   |Au^\nu_{h_\nu}(s)|^2 )  ds\Big\}^\frac{1}{2} 
% \nonumber\\%&
 \leq  C(T,N)    \sqrt{\nu}. \label{T2n}
\end{align}
The  Cauchy-Schwarz inequality  implies   % and condition {\bf (C5)} imply
\begin{equation}\label{T3n}
\EX \Big( 1_{G_{N,\nu}(T)} \sup_{0\leq t\leq T} |T_4(t,\nu)|\Big) \leq \sqrt{\nu} \, C(N,M,T).
\end{equation}
The definition of $G_{N,\nu}(T)$ implies  that
\begin{equation}\label{T4n}
\EX \Big(  1_{G_{N,\nu}(T)} \sup_{0\leq t\leq T} |T_3(t,\nu)|\Big) \leq  C\, T\, N\,{\nu}.
\end{equation}
The inequalities \eqref{*} - \eqref{T4n} show that the proof of \eqref{cv1} reduces to check that
\begin{equation} \label{T5n}
\lim_{\nu\to 0}\; \EX \Big( 1_{G_{N,\nu}(T)} \sup_{0\leq t\leq T} |T_5(t,\nu)|\Big)=  0\, .
\end{equation}
 In further estimates we use Lemma~\ref{timeincrement} with $\psi_n=\bar{s}_n$, where
$\bar{s}_n$ is the step function defined by $\bar{s}_n = kT2^{-n}$ for $(k-1)T2^{-n} \leq s<kT2^{-n}$.
For any $n,N \geq 1$, if we set $t_k=kT2^{-n}$ for $0\leq k\leq 2^n$,  we obviously have
\begin{equation}\label{t5}
\EX\Big( 1_{G_{N,\nu}(T)}\sup_{0\leq t\leq T} |T_5(t,\nu)| \Big)
\leq 2\;  \sum_{1\leq i\leq 4}  \tilde{T}_i(N,n, \nu)+ 2 \; \EX \big( \bar{T}_5(N,n,\nu)\big),
\end{equation}
 where
\begin{align*}
\tilde{T}_1(N,n,\nu)=& \EX \Big[ 1_{G_{N,\nu}(T)} \sup_{0\leq t\leq T}  \Big| \int_0^t  \Big(  \s(s,u^0_h(s))
  \big( h_\nu(s)-h(s)\big) \, ,\,
A^{2\alpha}\big[ U_\nu(s)- U_\nu(\bar{s}_n)\big] \Big)  ds\Big| \Big] ,\\
\tilde{T}_2 (N,n,\nu)=& \EX\Big[ 1_{G_{N,\nu}(T)} \\
& \quad
\times\sup_{0\leq t\leq T} \Big| \int_0^t
\Big( [\s(s,u^0_h(s)) - \s(\bar{s}_n, u^0_h(s))] (h_\nu(s)-h(s))\, ,\, A^{2\alpha} U_\nu(\bar{s}_n)\Big)ds \Big|\Big], \\
\tilde{T}_3 (N,n,\nu)=& \EX\Big[  1_{G_{N,\nu}(T)}
\\ &  \times
\sup_{0\leq t\leq T} \Big| \int_0^t \Big( \big[ \s(\bar{s}_n,u^0_h(s))
- \s(\bar{s}_n, u^0_h(\bar{s}_n))\big]
\big(h_\nu(s) - h(s) \big)\, ,\, A^{2\alpha} U_\nu(\bar{s}_n)\Big) ds\Big| \Big] ,\\
\tilde{T}_4(N,n,\nu)=& \EX \Big[  1_{G_{N,\nu}(T)} \sup_{1\leq k\leq 2^n}  \sup_{t_{k-1}\leq t\leq t_k}
\Big| \Big( \sigma(t_k, u^0_h(t_k))  \int_{t_{k-1}}^t \!\!\!
(h_\nu(s)-h(s)) ds \, ,\,A^{2\alpha} U_\nu(t_k)\Big) \Big| \Big],\\
\bar{T}_5(N,n, \nu)=&  1_{G_{N,\nu}(T)} \sum_{1\leq k\leq 2^n} \Big| \Big( \s(t_k, u^0_h(t_k))
\int_{t_{k-1}}^{t_k }  \big(h_\nu(s)-h(s)\big)\, ds \, ,\,
A^{2\alpha} U_\nu(t_k )\Big) \Big| .
\end{align*}
Using  the Cauchy-Schwarz and Young  inequalities,  ({\bf C5}), \eqref{compcalH},
 \eqref{timenu}  and \eqref{time0} in Lemma~\ref{timeincrement}
with $\psi_n(s)=\bar{s}_n$, we deduce that  for some constant
$\bar{C}_1:= C(T,M,N)$ independent of    $\nu \in ]0, \nu_0]$,
\begin{align} \label{eqT1}
&  \tilde{T}_1(N,n,\nu)\leq
k_0^{4\alpha-1} \EX\Big[ 1_{G_{N,\nu}(T)}  \int_0^T\!\!\!
 \big( \bar{K}_0+\bar{K}_1|u^0_h(s)|^2\big)^{\frac{1}{2}}
|h_\nu(s)-h(s)|_0\, \big\| U_\nu(s)-U_\nu(\bar{s}_n)\big\|\, ds\Big]  \nonumber \\
& \quad   \leq k_0^{4\alpha-1}
\Big(  \EX \Big[ 1_{G_{N,\nu}(T)}  \int_0^T 2 \big\{ \|u^\nu_{h_\nu}(s) - u^\nu_{h_\nu}(\bar{s}_n)\|^2 +
 \|u^0_{h}(s) - u^0_{h}(\bar{s}_n)\|^2 \big\}\, ds\Big] \Big)^{\frac{1}{2}}
\nonumber \\
&\qquad \times
\sqrt{\bar{K}_0 + k_0^{-2} \bar{K}_1 N}\;
  \Big( \EX  \int_0^T  2\big[ |h_\nu(s)|_0^2 + |h(s)|_0^2\big] \, ds \Big)^{\frac{1}{2}}
\leq \bar{C}_1 \;  2^{-\frac{n}{4}}.
\end{align}
A similar computation based on ({\bf C5}) and \eqref{time0} from  Lemma \ref{timeincrement} yields  for
some constant $\bar{C}_3:=C(T,M,N)$ and any   $\nu \in ]0, \nu_0]$
\begin{align} \label{eqT2}
 \tilde{ T}_3 (N,n,\nu) &\leq  \sqrt{2Nk_0^{-2}L_1} \Big( \EX \Big[ 1_{G_{N,\nu}(T)}  \int_0^T\!\!
 \|u^0_{h}(s) - u^0_{h}(\bar{s}_n)\|^2 \, ds\Big]  \Big)^{\frac{1}{2}} \Big(
 \EX \int_0^T \!\! |h_\nu(s)-h(s)|_0^2 \,  ds \Big)^{\frac{1}{2}}
 \nonumber \\
& \leq \bar{C}_3 \;  2^{- \frac{n}{4}}.
\end{align}
The H\"older regularity {\bf (C5)} imposed on $\s(.,u)$ and the Cauchy-Schwarz inequality imply that
\begin{equation}  \label{eqHolder}
\tilde{T}_2(N,n,\nu)\leq C  \sqrt{N}  2^{-n\gamma}\, \EX\Big(
1_{G_{N,\nu}(T)} \int_0^T\!\! \left(1+\| u^0_h(s)\|\right)  |h_\nu(s)-h(s)|_0 ds \Big)
\leq \bar{C}_2  2^{-n\gamma}
\end{equation}
for some constant $\bar{C}_2=C(T,M,N)$.
Using  Cauchy-Schwarz's inequality  and ({\bf C5}) we deduce  for $\bar{C}_4=C(T,N,M)$
and any $\nu \in ]0, \nu_0]$
\begin{align} \label{eqT3}
\tilde{T}_4 & (N,n,\nu)\leq  \EX \Big[  1_{G_{N,\nu}(T)} \sup_{1\leq k\leq 2^n}
\big(\bar{K}_0+\bar{K}_1| u^0_h(t_k)|^2
\big)^{\frac{1}{2}} \int_{t_{k-1}}^{t_k}\!\! |h_\nu(s)-h(s)|_0
\, ds \, \|U_\nu(t_k)\| \, k_0^{4\alpha -1}\Big]\nonumber \\
&\leq C(N) \;  \EX\Big( \sup_{1\leq k\leq 2^n}
 \int_{t_{k-1}}^{t_k} \big(|h_\nu(s)|_0 + |h(s)|_0\big) \, ds\Big)
\leq   \bar{C}_4\;  2^{-\frac{n}{2}}.
\end{align}
Finally, note that the weak convergence of $h_\nu$ to $h$ implies that as $\nu\to 0$,  for any $a,b\in [0,T]$,
 $a<b$, the integral $\int_a^b h_\nu(s) ds \to \int_a^b h(s) ds$ in the weak topology of $H_0$.
Therefore, since
the operator $\sigma(t_k, u^0_h(t_k))$ is compact from $H_0$ to $H$, we deduce that
for every $k$,
\[
\Big| \sigma(t_k, u^0_h(t_k)) \Big(   \int_{t_{k-1}}^{t_k} h_\nu(s) ds - \int_{t_{k-1}}^{t_k} h(s) ds
  \Big) \Big|_H \to 0~~\mbox{ as }~~\nu \to 0.
\]
Hence a.s. for fixed $n$ as $\nu \to 0$, $\bar{T}_5 (N,n,\nu) \to 0$ while
 $\bar{T}_5(N,n,\nu)
\leq C(\bar{K}_0,\bar{K}_1,N,n, M)$. The dominated convergence theorem proves that
 $\EX(\bar{T}_5(N,n,\nu)) \to 0$ as $\nu \to 0$ for any fixed $n,N$.
\par
This convergence and \eqref{t5}--\eqref{eqT3} complete the proof of \eqref{T5n}.
Indeed, they imply that for any fixed $N\geq 1$  and any integer $n\geq 1$
\begin{equation*}
\limsup_{\nu\to 0}\EX \Big[ 1_{G_{N,\nu}(T)} \sup_{0\leq t\leq T}  |T_5(t,\nu)|\Big] \leq
C_{N,T,M}\;  2^{-n( \frac{1}{4}\wedge \gamma )}.
\end{equation*}
for some constant $C(N,T,M)$ independent of $n$.
Since $n$ is arbitrary, this yields for any integer $N\geq 1$ the convergence property \eqref{T5n} holds.
%\begin{equation} \label{t4conv}
%\lim_{\nu\to 0}\EX \Big[ 1_{G_{N,\nu}(T)} \sup_{0\leq t\leq T} |T_4(t,\nu)|\Big] =0 .
%\end{equation}
%Therefore from \eqref{*} and \eqref{lambda1} we obtain \eqref{cv1}.
By the Markov inequality, we have for any $\delta >0$
\[
\PP\Big(\sup_{0\leq t\leq T} \|U_\nu(t)\|_\alpha  > \de \Big)  \leq  \PP(G_{N,\nu}(T)^c )+ \frac{1}{\delta^2}
 \EX\Big( 1_{G_{N,\nu}(T)} \sup_{0\leq t\leq T} \|U_\nu(t) \|_\alpha^2\Big).
\]
Finally,  \eqref{G-c} and \eqref{cv1} yield that for any integer $N\geq 1$,
\[
\limsup_{\nu\to 0} \PP\Big(\sup_{0\leq t\leq T} \|U_\nu(t)\|_\alpha  > \de )\le  C(T,M,\delta) N^{-1},
%  ~~\mbox{ for any }~~ N\ge 1
\]
for some constant $C(T,M,\delta)$ which does not depend on $N$.
Letting $N\to +\infty$   concludes the proof of the proposition.
\end{proof}

The following compactness result
is the second ingredient which allows to transfer the  LDP  from $\sqrt{\nu} W$ to $u^\nu$.
 Its proof is similar to that of Proposition \ref{weakconv}
and easier; it will be sketched (see also \cite{DM}, Proposition 4.4).
\begin{prop}  \label{compact}
 Suppose that the constants $a, b,\mu$ defining $B$ satisfy the condition $a(1+\mu^2)+b\mu^2=0$,
$\s$ satisfies the conditions ({\bf C5}) and ({\bf C6}) and let  $\alpha\in [0,\frac{1}{4}]$.
Fix  $M>0$, $\xi\in V$ and let
$ K_M=\{u_h^0  :  h \in S_M \}$,
where $u_h^0$ is the unique solution in ${\mathcal C}([0,T],V)$ of the deterministic control
equation \eqref{control}.
Then $K_M$ is a compact subset of  ${\mathcal X}= {\mathcal C}([0,T], V)$
endowed with the norm $\|u\|_{\mathcal X} = \sup_{0\leq t\leq T} \|u(t)\|_\alpha$.
\end{prop}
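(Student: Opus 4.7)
\medskip

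\noindent \emph{Proof proposal.} The plan is to prove sequential compactness of $K_M$ in $\mathcal{X}$, which suffices since $\mathcal{X}$ is metrizable. Let $(u_{h_n}^0)$ be a sequence in $K_M$ with $h_n\in S_M$. Because $S_M$ is a bounded, weakly closed subset of the Hilbert space $L^2(0,T;H_0)$, it is weakly compact; up to extraction we may therefore assume $h_n\to h$ weakly in $L^2(0,T;H_0)$ for some $h\in S_M$. The goal is to show that $u_{h_n}^0\to u_h^0$ in $\mathcal{X}$, so that $u_h^0$ belongs to $K_M$ and is the unique cluster point of the sequence.

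Set $U_n=u_{h_n}^0-u_h^0$, so that $U_n(0)=0$ and
\begin{equation*}
dU_n(t)=-\bigl[B(u_{h_n}^0(t))-B(u_h^0(t))\bigr]\,dt+\bigl[\sigma(t,u_{h_n}^0(t))h_n(t)-\sigma(t,u_h^0(t))h(t)\bigr]\,dt.
\end{equation*}
Applying the chain rule to $\|U_n(t)\|_\alpha^2=|A^\alpha U_n(t)|^2$, using the bilinear estimate \eqref{AalphaB} for the $B$-difference, the Lipschitz property \eqref{Lipalpha} from {\bf (C6)} for the splitting $\sigma(u_{h_n}^0)h_n-\sigma(u_h^0)h=[\sigma(u_{h_n}^0)-\sigma(u_h^0)]h_n+\sigma(u_h^0)(h_n-h)$, and the uniform a priori bound \eqref{boundu0}, I will obtain
\begin{equation*}
\|U_n(t)\|_\alpha^2\;\leq\; \int_0^t\|U_n(s)\|_\alpha^2\bigl[C\,(\|u_{h_n}^0(s)\|+\|u_h^0(s)\|)+L_3|h_n(s)|_0\bigr]\,ds+R_n(t),
\end{equation*}
where the residual $R_n(t)=2\int_0^t(A^\alpha\sigma(s,u_h^0(s))(h_n(s)-h(s)),A^\alpha U_n(s))\,ds$ carries all the $n$-dependence beyond a quadratic Gronwall term. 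Since $h_n\in S_M$ and $u_h^0,u_{h_n}^0\in K_M$ satisfy \eqref{boundu0}, Gronwall's lemma gives $\sup_{t\in[0,T]}\|U_n(t)\|_\alpha^2\leq C(M,\xi)\,\sup_{t\in[0,T]}|R_n(t)|$, so everything reduces to proving $\sup_{t}|R_n(t)|\to 0$.

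The main obstacle, and the only place where the weak (not strong) convergence of $h_n$ is felt, is the treatment of $R_n$; this is exactly the deterministic analogue of the estimate \eqref{T5n} in the proof of Proposition \ref{weakconv}. The plan is to freeze the slow factor on a dyadic partition $t_k=kT2^{-n_0}$: writing $\bar{s}_{n_0}$ for the piecewise-constant approximation, decompose $R_n(t)$ by inserting and subtracting $\sigma(\bar{s}_{n_0},u_h^0(\bar{s}_{n_0}))$ and $A^{2\alpha}U_n(\bar{s}_{n_0})$. The four ``replacement'' terms are controlled by the deterministic time-increment estimate \eqref{time0} of Lemma \ref{timeincrement} applied with $\psi_{n_0}=\bar{s}_{n_0}$ (for $u_h^0$ and $u_{h_n}^0$, both in $K_M$), together with the H\"older-in-time control on $\sigma$ from {\bf (C5)} and the bound $|h_n|_0,|h|_0\in L^2$; each one is at most $C(M,\xi)\,2^{-n_0\beta}$ for some $\beta>0$, uniformly in $n$. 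The remaining piece is a finite sum $\sum_{k}\bigl(\sigma(t_k,u_h^0(t_k))\int_{t_{k-1}}^{t_k}(h_n-h)\,ds,A^{2\alpha}U_n(t_k)\bigr)$ over a fixed partition; here weak convergence of $h_n$ in $L^2(0,T;H_0)$ together with the fact that $\sigma(t_k,u_h^0(t_k))\in L_Q\subset\mathcal{K}(H_0,H)$ is compact forces $\sigma(t_k,u_h^0(t_k))\int_{t_{k-1}}^{t_k}(h_n-h)ds\to 0$ strongly in $H$ as $n\to\infty$, so the sum tends to $0$ for each fixed $n_0$. Sending first $n\to\infty$ and then $n_0\to\infty$ delivers $\sup_t|R_n(t)|\to 0$, hence $u_{h_n}^0\to u_h^0$ in $\mathcal{X}$, which completes the sequential compactness argument. \hfill$\Box$
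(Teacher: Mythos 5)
Your proposal is correct and follows essentially the same route as the paper: weak compactness of $S_M$ to extract a weakly convergent control subsequence, a chain‑rule/Gronwall estimate on $\|U_n\|_\alpha^2$ using \eqref{AalphaB}, {\bf (C6)}, and the uniform bound \eqref{boundu0}, then the dyadic freezing of the slow factors, with the replacement terms handled by Lemma~\ref{timeincrement} plus the H\"older-in-time control from {\bf (C5)}, and the finite-sum term killed by compactness of $\s(t_k,u_h^0(t_k))$ against the weak convergence of $h_n$.
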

\begin{proof}
To ease  notation, we skip the superscript 0 which refers to the inviscid case.
By Theorem \ref{exisuniq0}, $K_M\subset {\mathcal C}([0,T],V) $.
Let $\{u_n\}$ be a sequence in $K_M$, corresponding to solutions of
(\ref{control}) with controls $\{h_n\}$ in $S_M$:
\begin{eqnarray*} %\label{dcontroln}
d u_n(t) + B(u_n(t))dt =\s(t,u_n(t)) h_n(t) dt, \;\;
u_n(0)=\xi.
\end{eqnarray*}
 Since $S_M$ is a
bounded closed subset in the Hilbert space $L^2(0, T; H_0)$, it
is weakly compact. So there exists a subsequence of $\{h_n\}$, still
denoted as $\{h_n\}$, which converges weakly to a limit $h
\in L^2(0, T; H_0)$. Note that in fact $h \in S_M$ as $S_M$ is
closed. We now show that the corresponding subsequence of
solutions, still denoted as  $\{u_n\}$, converges in $X$ % $ {\mathcal C}([0,T],{\mathcal H}_\alpha)$
 to $u$ which is the solution of the
following ``limit'' equation
\begin{eqnarray*}% \label{limiteqn}
d u(t) + B(u(t)) dt =\s(t, u(t)) h(t) dt, \;\;
u(0)=\xi.
\end{eqnarray*}
Note that we know from Theorem \ref{exisuniq0} that $u\in {\mathcal C}([0,T],V)$, and that one only needs
to check that the convergence of $u_n$ to $u$ holds uniformly in time for the  weaker
$\|\,\cdot\,\|_\alpha$ norm on $V$.
% This will complete the proof of the compactness of $K_M$.
  To ease notation
 we will often drop the time parameters $s$, $t$, ... in the equations and integrals.
Let $U_n=u_n-u$; using \eqref{AalphaB} and {\bf (C6)}, we deduce that  for $t\in [0, T]$,
\begin{align}
& \|U_n(t)\|_\alpha^2
= - 2\int_0^t \!\! \big( A^\alpha B(u_n(s))-A^\alpha B(u(s))\, ,\,  A^\alpha U_n(s)\big) \, ds
 \nonumber \\
&\qquad    + 2\int_0^t \Big\{ \Big( A^\alpha \big[ \s(s,u_n(s))
-\s(s,u(s))\big] h_{n}(s)\,,\, A^\alpha U_n(s)\Big) \nonumber \\ &
\qquad
 + \big(A^\alpha \s(s,u(s)) \big(h_n(s)-h(s)\big)\, ,\, A^\alpha U_n(s)\big)\Big\} ds  \nonumber \\
& \leq   2 C \int_0^t\!\!  \|U_n(s)\|_\alpha^2 \big( \|u_n(s)\|+\|u(s)\|\big)  ds
 + 2 L_3 \int_0^t \|U_n(s)\|_\alpha^2
  |h_n(s)|_0\, ds
\nonumber \\
&\qquad
 + 2 \int_0^t \Big(  \s(s,u(s))\, [h_{n}(s)-h(s)]\; ,\; A^{2\alpha} U_n(s)\Big) \, ds  .
\label{error1}
\end{align}
The inequality \eqref{boundu0} implies that there exists a finite positive constant $\tilde{C}$ such that
\begin{equation}\label{est-uni}
  \sup_n  \sup_{0\leq t\leq T} \big(\|u(t)\|^2 + \|u_n(t)\|^2\big)  =  \tilde{C}.
\end{equation}
Thus Gronwall's lemma implies that
\begin{equation} \label{errorbound}
 \sup_{0\leq  t\leq T} \|U_n(t)\|_\alpha^2   \leq
\exp\Big(2 C  \tilde{C}  +2  L_3 \sqrt{ M T}  \big)\Big)\,
  \sum_{1\leq i\leq 5} I_{n,N}^{i}  ,
\end{equation}
where,  as in the proof of  Proposition~\ref{weakconv}, we have:
\begin{eqnarray*}
I_{n,N}^1&=& \int_0^T \big| \big( \s(s,u(s))\,  [h_n(s)- h(s)]\, ,\,
A^{2\alpha}  U_n(s)-A^{2\alpha} U_n(\bar{s}_N)\big)\big|\, ds,
\\
I_{n,N}^2&=& \int_0^T\Big|  \Big( \big[ \s(s,u(s)) - \s(\bar{s}_N, u(s)) \big]  [h_n(s)-h(s)]\, ,\,
A^{2\alpha} U_n(\bar{s}_N)\Big)\Big| \, ds,  \\
I_{n,N}^3&=& \int_0^T\Big|  \Big( \big[ \s(\bar{s}_N,u(s)) - \s(\bar{s}_N,u(\bar{s}_N))\big]
  [h_n(s)-h(s)]\, ,\,
A^{2\alpha}  U_n(\bar{s}_N)\Big)\Big| \, ds,  \\
I_{n,N}^4&=& \sup_{1\leq k\leq 2^N} \sup_{t_{k-1}\leq t\leq t_k} \Big|\Big( \s(t_k,u(t_k ))
 \int_{t_{k-1 }}^t (h_n(s)-h(s))
ds \; ,\; A^{2\alpha} U_n(t_k) \Big)\Big| , \\
I_{n,N}^5&=& \sum_{1\leq k\leq 2^N} \Big| \Big( \s(t_k, u(t_k )) \,  \int_{t_{k-1}}^{t_k }
[ h_n(s)-h(s)]\, ds   \; ,\; A^{2\alpha}  U_n(t_k ) \Big)\Big| .
\end{eqnarray*}
The Cauchy-Schwarz inequality, \eqref{est-uni}, ({\bf C5})  and \eqref{time0}   %in Lemma \ref{timeincrement}
 imply that
for some constants   $C_i$, $i=0,\cdots,4$,  which depend  on $k_0$,  $ \bar{K}_i$,  $\bar{L}_1$,  
 $\tilde{C}$, $M$ and $T$, but do not
depend on $n$ and $N$,
\begin{align} \label{estim1}
I_{n,N}^1 &\leq C_0 \Big( \int_0^T \!\! \big( \|u_n(s)-u_n(\bar{s}_N)\|^2 + \|u(s)-u(\bar{s}_N)\|^2\big) ds
 \Big)^{\frac{1}{2}} \Big( \int_0^T\!\! |h_n(s)-h(s)|_0^2 ds \Big)^{\frac{1}{2}}
\nonumber\\
&\leq C_1  \; 2^{- \frac{N }{2}} \, ,\\
I_{n,N}^3 &\leq C_0 \Big(\int_0^T \|u(s)-u(\bar{s}_N)\|^2 ds \Big)^{\frac{1}{2}}
  2 \sqrt{M} %\Big( \int_0^T |h_n(s)-h(s)|_0^2\, ds\Big)^{\frac{1}{2}}
 \leq C_3\;  2^{-\frac{N }{2}}\, ,
\label{estim2}\\
I_{n,N}^4 &\leq C_0 \,  2^{-\frac{N}{2}}  \Big(1+  \sup_{0\leq t\leq T}  \|u(t)\|\Big)\,  \sup_{0\leq t\leq T}
\big( \|u(t)\|+\|u_n(t)\|\big) 2\sqrt{M}  \leq
C_4\;  2^{-\frac{N}{2}}\,  .
\label{estim3}
\end{align}
Furthermore, the H\"older regularity of $\s(.,u)$ from condition {\bf (C5)} implies that
\begin{align} \label{eqHolderdet}
I_{n,N}^2 \leq& \bar{C} 2^{-N\gamma}\, \sup_{0\leq t\leq T}\big(\|u(t)\|+\|u_n(t)\|\big)\nonumber \\
&\quad \times  \int_0^T (1+\|u(s)\|)  (|h(s)|_0+
|h_n(s)|_0)\, ds  \leq C_2\, 2^{-N \gamma}.
\end{align}
For fixed $N$ and $k=1, \cdots, 2^N$,
 as $n\to \infty$, the weak convergence of $h_n$ to $h$ implies
that of $\int_{t_{k-1}}^{t_k} (h_n(s)-h(s))ds$ to 0 weakly in $H_0$.
 Since $\s(t_k,u(t_k))$ is a compact operator,
we deduce that for fixed $k$ the sequence $\s(t_k, u(t_k)) \int_{t_{k-1}}^{t_k} (h_n(s)-h(s))ds$
converges to 0 strongly in $H$ as $n\to \infty$.
Since $\sup_{n,k}   \|U_n(t_k)\|\leq 2 \sqrt{\tilde{C}}$, we have
 $\lim_n I_{n,N}^5=0$. Thus
 \eqref{errorbound}--% and \eqref{estim1}--
\eqref{eqHolderdet} yield  for every integer $N\geq 1$
%we have
\[
\limsup_{n \to \infty}  \sup_{ t\leq T} \|U_n(t)\|_\alpha^2  \le C 2^{-N ( \frac{1}{2}\wedge \gamma)}.
\]
Since $N$ is arbitrary, we deduce that $\sup_{0\leq t\leq T} \|U_n(t)\|_\alpha \to 0$ as $n\to \infty$.
This shows that every sequence in $K_M$ has a convergent
subsequence. Hence $K_M$ is  a sequentially relatively compact subset of
${\mathcal X}$. % ${\mathcal C}([0,T],{\mathcal H}_\alpha)$.
Finally, let $\{u_n\}$ be a sequence of elements of $K_M$ which converges to $v$ in ${\mathcal X}$.
 The above argument
shows that there exists a subsequence $\{u_{n_k}, k\geq 1\}$
which converges to some element $u_h\in K_M$
for the  uniform topology on  ${\mathcal C}([0,T], V)$ endowed with the $\|\, \cdot\, \|_\alpha$ norm.
 Hence $v=u_h$,
$K_M$ is a closed subset of ${\mathcal X}$, and this completes the proof of the proposition.
\end{proof}

\noindent {\bf Proof of Theorem \ref{PGDunu}:}
 Propositions \ref{compact} and  \ref{weakconv} imply that the family
$\{u^\nu\}$ satisfies the Laplace principle,  which is equivalent
to the large deviation principle, in ${\mathcal X} $ defined in \eqref{defX}
 with the  rate function defined by \eqref{ratefc};
 see Theorem 4.4 in \cite{BD00} or
Theorem 5 in \cite{BD07}. This  concludes the proof of Theorem \ref{PGDunu}.
\hfill $\Box$
\vspace{.5cm}

 \textbf{Acknowledgment:} This work was partially written while  H. Bessaih
was invited professor at the University of Paris 1. The work of this author has also
been supported by the NSF grant No. DMS 0608494.
 \vspace{.5cm}

\end{document}